\NeedsTeXFormat{LaTeX2e}
\documentclass[letterpaper,11pt]{amsart}

\usepackage[all]{xy}
\usepackage{amsfonts}
\usepackage{amsmath}
\usepackage{amssymb}
\usepackage{amsthm}

\topmargin=0.0 in 
\textheight=9.0 in 
\textwidth=6 in
\oddsidemargin=0.3 in 
\evensidemargin=0.3 in 
\pagestyle{headings}
\newtheorem{thm}{Theorem}[section]

\newtheorem{quest}{Open Question}
\newtheorem{cor}[thm]{Corollary}
\newtheorem{lem}[thm]{Lemma}
\newtheorem{prop}[thm]{Proposition}

\theoremstyle{definition}
\newtheorem{defn}[thm]{Definition}

\newtheorem{exam}[thm]{Example}

\theoremstyle{remark}
\newtheorem{rem}[thm]{Remark}

\numberwithin{equation}{section}

\begin{document}

\author{gangyong lee}
\title{The rational hull of modules}

\address{}
\address{Gangyong Lee, Department of Mathematics Education, Chungnam National University}
\address{Daejeon 34134, Republic of Korea}
\address{e-mail: lgy999$@$cnu.ac.kr}

\subjclass[2020]{Primary 16D70; 16S50, Secondary 16D50}

\begin{abstract} In this paper, we provide several new characterizations of the maximal right ring of quotients of a ring by using the relatively dense property. As a ring is embedded in its maximal right ring of quotients, we show that the endomorphism ring of a module is embedded into that of the rational hull of the module. In particular, we obtain new characterizations of rationally complete modules.
The equivalent condition for the rational hull of the direct sum of modules to be the direct sum of the rational hulls of those modules under certain assumption is presented. 
For a right $H$-module $M$ where $H$ is a right ring of quotients of a ring $R$, we provide a sufficient condition to be $\text{End}_R(M)=\text{End}_H(M)$. Also, we give a condition for the maximal right ring of quotients of the endomorphism ring of a module to be the
endomorphism ring of the rational hull of a module.
\end{abstract}

\maketitle

Key Words: rational hull, injective hull, maximal right
ring of quotients

\section{Introduction}
The theory of rings of quotients has its origin in the work of \O. Ore \cite{oo} and K. Asano \cite{as} on the construction of the total ring of fractions, in the 1930's and 40's. But the subject did not really develop until the end of the 1950's, when a number of important papers appeared (by R.E. Johnson \cite{rej}, Y. Utumi \cite{ut2}, A.W. Goldie \cite{awg}, J. Lambek \cite{lambek1} et al). In particular, Johnson(1951), Utumi(1956), and Findlay $\&$ Lambek(1958) have studied the maximal right ring of quotients of a ring which is an extended ring of the base ring. For the well-known example, the maximal right ring of quotients of integers is rational numbers. It is the same as the injective hull of integers. For a commutative ring, the classical right ring of quotients of a ring is to its total quotient ring as the maximal right ring of quotients of a ring is to its complete ring of quotients.

As we know, the study of the rational hull of a module is the same as that of the maximal right ring of quotients in a different way.
Also, like every module has the injective hull, it is known that every module has the rational hull in \cite[Theorem 2.6]{fl}.
Now, we introduce the definition of the rational hull of a module and present its well-known results, briefly. Let $M$ be a right $R$-module and $T=\text{End}_R(E(M))$. Put\\
$\widetilde E(M)=\{x \in E(M)| \,\vartheta(M)=0 ~\text{with}~  \vartheta\in T ~ \Rightarrow \ \vartheta(x)=0\}=\displaystyle\bigcap_{\substack{M\subseteq\text{Ker}\vartheta \\ \vartheta\in T}}\text{Ker}\vartheta=\mathbf{r}_{E(M)}\left(\mathbf{l}_T(M)\right).$
Then $\widetilde E(M)$ is the unique maximal rational extension of $M$. We call it the \emph{rational hull} of $M$.
Also, it is known that $\mathbf{r}_{E(M)}(J(T)) \leq\mathbf{r}_{E(M)}\left(\mathbf{l}_T(M)\right)=\widetilde E(M)$ because $\mathbf{l}_T(M)\subseteq J(T)$ where $J(T) = \{ \alpha\in T\, |\, \text{Ker}\alpha\leq^\text{ess} E(M)\}$ is a Jacobson radical of a ring $T$.
Note that the maximal right ring of quotients of $R$ is $Q(R)=\mathbf{r}_{E(R)}(\mathbf{l}_H(R))$ where $H=\text{End}_R(E(R))$ (see \cite[Proposition 2]{lambek1}). 

After the necessary background history, results, and notations in this section, we provide several characterizations of the rational hull of a module in Section 2 (see Theorem \ref{char} and Corollary \ref{char2}).
 In addition, characterizations of rationally complete modules are presented. As a corollary, we obtain several new characterizations of the maximal right ring of quotients of a ring. In particular, we show that the endomorphism ring of a module is embedded into that of the rational hull of the module as the inherited property of its maximal right ring of quotients (see Theorem \ref{relationship}). 
Our focus, in Section 3, is on the question of when is the rational hull of the direct sum of modules the direct sum of the rational hulls of those modules.
For $M=\bigoplus_{k\in \Lambda} M_k$, we prove that $\widetilde{E}(M)=\bigoplus_{k\in \Lambda} \widetilde{E}(M_k)$ if and only if $M_i$ is $M_j$-{dense} in $\widetilde{E}(M_i)$ for all $i, j \in \Lambda$ when either $R$ is right noetherian or $|\Lambda|$ is finite (see Theorem \ref{directsum}). 
In the last section, we obtain a condition to be $\text{End}_R(M)=\text{End}_H(M)$ where $H$ is a right ring of quotients of a ring $R$ (Theorem \ref{subprojectivemaximal}). This condition is called the \emph{relatively dense property} to a module. Also, we provide a sufficient condition for the maximal right ring of quotients of the endomorphism ring of a module to be the endomorphism ring of the rational hull of a module (see Theorem \ref{maximalrational}).

\vspace{0.2cm}

Throughout this paper, $R$ is a ring with unity and $M$ is a unital
right $R$-module. For a right $R$-module $M$, $S=\text{End}_R(M)$
denotes the endomorphism ring of $M$; thus $M$ can be viewed as a
left $S$- right $R$-bimodule. For $\varphi \in S$, $\text{Ker}
\varphi$ and $\text{Im}\varphi$ stand for the kernel and the image
of $\varphi$, respectively. The notations $N\leq M$, $N\leq^\text{ess} M$, $N\leq^\text{den} M$ 
or $N\leq^\oplus M$ mean that $N$ is a
submodule, an essential submodule, a dense submodule, or a
direct summand of $M$, respectively.
By $E(M)$, $\widehat M$, and $\widetilde E(M)$ we denote the injective hull, the quasi-injective hull, and the rational hull  of $M$, respectively, and $T=\text{End}_{R}(E(M))$. $Q(R)$ denotes the maximal right ring of quotients of $R$. 
 The direct sum of $\Lambda$ copies of
$M$ is denoted by $M^{(\Lambda)}$ where $\Lambda$ is an arbitrary index set. $\mathsf{CFM}_{\mathbb{N}}(F)$ denotes
the $\mathbb{N}\times\mathbb{N}$ column finite matrix ring over a field $F$.
By $\mathbb{Q}$, $\mathbb{Z}$, and $\mathbb{N}$ we denote the set of
rational, integer, and natural numbers, respectively.
$\mathbb{Z}_n$ denotes the $\mathbb{Z}$-module
$\mathbb{Z}/n\mathbb{Z}$. For $x\in M$, $x^{-1}K=\{r\in R\,|\,xr\in K\}\leq R_R$ with a right $R$-submodule $K$ of $M$.
We also denote $\mathbf{r}_M(I)=\{m\in M\,|\, Im=0\}$ for
$I\leq S$ and $\mathbf{l}_S(N)=\{\varphi\in S\,|\, \varphi N=0\}$ for
$N\leq M$. 

\vspace{0.2cm}

We give some properties of dense submodules.
Recall that a submodule $N$ of $M$ is said to be \emph{dense} in $M$ if for any $x, 0\neq y\in M,$ there exists $r\in R$ such that $xr\in N$ and $0\neq yr$.
\begin{prop}[{\cite[Proposition 1.3.6]{bprbook}}]\label{pro138} Let $N\leq M$ be right $R$-modules. Then the following conditions are equivalent:
\begin{enumerate}
\item [(a)] $N$ is dense in $M$;
\item [(b)] $\emph{Hom}_R(M/N, E(M))=0$;
\item [(c)] for any submodule $P$ such that $N\leq P\leq M$, $\emph{Hom}_R(P/N, M)=0$.
\end{enumerate}
\end{prop}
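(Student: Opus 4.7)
The plan is to establish the cycle (a) $\Rightarrow$ (b) $\Rightarrow$ (c) $\Rightarrow$ (a). The architecture rests on two classical facts about the injective hull: that $M \leq^\text{ess} E(M)$, which pulls nonzero targets in $E(M)$ back down into $M$, and that $E(M)$ is injective, which extends partial maps $P/N \to M$ to total maps $M/N \to E(M)$. These two facts mediate between the ``internal'' density statements in (a) and (c) and the ``external'' Hom-vanishing statement in (b).

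For (a) $\Rightarrow$ (b), I would argue by contradiction: given a nonzero $f : M/N \to E(M)$, pick $x \in M$ with $f(x+N) \neq 0$, then use essentiality of $M$ in $E(M)$ to produce $s \in R$ with $0 \neq f(x+N)s \in M$. Replacing $x$ by $xs$, I may assume $y := f(x+N) \in M \setminus \{0\}$. Density applied to the pair $(x,y)$ yields $r \in R$ with $xr \in N$ but $yr \neq 0$; the identity $f(xr+N) = yr$ produces the contradiction, since $xr + N = 0$ in $M/N$. For (b) $\Rightarrow$ (c), given $N \leq P \leq M$ and $g : P/N \to M$, I compose with the inclusion $M \hookrightarrow E(M)$ and use injectivity of $E(M)$ (applied to the embedding $P/N \hookrightarrow M/N$) to extend to $\tilde g : M/N \to E(M)$; by (b), $\tilde g = 0$, hence $g = 0$.

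The step requiring an actual construction is (c) $\Rightarrow$ (a), which I would handle via the contrapositive. If $N$ fails to be dense in $M$, one obtains $x \in M$ and $0 \neq y \in M$ with $yI = 0$ where $I = x^{-1}N = \{r \in R : xr \in N\}$. Set $P = N + xR$, an intermediate submodule containing $N$, and define $g : P/N \to M$ by $g(xr + N) = yr$. The condition $yI = 0$ is exactly what makes $g$ well-defined, and $g$ is nonzero because $x \in N$ would force $1 \in I$ and hence $y = 0$, contradicting $y \neq 0$. This contradicts (c), closing the cycle. The only real obstacle is spotting the correct intermediate module $P = N + xR$ and the formula $g(xr+N) = yr$; once these are in hand, well-definedness, $R$-linearity, and nontriviality are routine verifications directly from $yI = 0$.
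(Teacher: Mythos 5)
Your proof is correct and follows essentially the same route the paper takes: the paper states this result as a citation to \cite[Proposition 1.3.6]{bprbook}, but proves its relative generalization (Proposition \ref{ndense}, the case $K$-dense) by exactly your cycle --- essentiality of $M$ in $E(M)$ for (a)$\Rightarrow$(b), injectivity of $E(M)$ for (b)$\Rightarrow$(c), and the map $N+xR\to M$, $n+xr\mapsto yr$ for (c)$\Rightarrow$(a). Your argument is just that proof specialized to $K=M$, so there is nothing to add.
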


\begin{prop}[{\cite[Proposition 8.7]{l}}]\label{pro87} Let $L, N$ be submodules of a right $R$-module $M$:
\begin{enumerate}
\item [(i)] If $L\leq^\emph{den}M$ and $N\leq^\emph{den}M$ then  $L\cap N\leq^\emph{den}M$.
\item [(ii)] Let $L\leq V \leq M$. Then $L\leq^\emph{den}M$ if and only if $L\leq^\emph{den}V$ and $V\leq^\emph{den}M$.
\end{enumerate}
\end{prop}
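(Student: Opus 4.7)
The plan is to prove both parts directly from the definition of dense submodule, by chaining the witness property carefully. Proposition \ref{pro138} offers a categorical reformulation via $\operatorname{Hom}_R(M/N, E(M)) = 0$, but the element-level argument is shorter and, more importantly, guides the iterated use of the property needed for the nontrivial direction of (ii).

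For (i), given $x \in M$ and $0 \neq y \in M$, I would first invoke $L \leq^{\text{den}} M$ to produce $r_1 \in R$ with $xr_1 \in L$ and $yr_1 \neq 0$. Then I would apply $N \leq^{\text{den}} M$ to the pair $(xr_1, yr_1)$ (whose second entry is nonzero) to obtain $r_2 \in R$ with $xr_1 r_2 \in N$ and $yr_1 r_2 \neq 0$. Since $L$ is a submodule, $xr_1 \in L$ implies $xr_1 r_2 \in L$, so $xr_1 r_2 \in L \cap N$, and $r = r_1 r_2$ is the desired witness.

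For (ii), the forward direction is straightforward: assuming $L \leq^{\text{den}} M$, the witness $r \in R$ given by density of $L$ also certifies $V \leq^{\text{den}} M$ (since $L \subseteq V$), and restricting the definition to $x, y \in V \subseteq M$ immediately yields $L \leq^{\text{den}} V$.

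The reverse direction is where I expect the only real obstacle, and it requires applying $V \leq^{\text{den}} M$ twice. Given $x \in M$ and $0 \neq y \in M$, I first apply $V \leq^{\text{den}} M$ to $(x,y)$ to get $r \in R$ with $xr \in V$ and $yr \neq 0$. The subtlety is that $yr$ need not lie in $V$, so $L \leq^{\text{den}} V$ cannot be invoked yet. To fix this, I would apply $V \leq^{\text{den}} M$ a second time, now to the pair $(yr, yr)$, obtaining $s \in R$ with $yrs \in V$ and $yrs \neq 0$. Then $xrs \in V$ automatically (as $xr \in V$), and both arguments $xrs, yrs$ now lie in $V$ with $yrs \neq 0$. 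A final application of $L \leq^{\text{den}} V$ yields $t \in R$ with $xrst \in L$ and $yrst \neq 0$, so $u = rst$ witnesses $L \leq^{\text{den}} M$.
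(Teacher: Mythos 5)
Your proof is correct; in particular, the double application of $V\leq^{\text{den}}M$ in the reverse direction of (ii) properly handles the subtlety that $yr$ need not lie in $V$, which is the only delicate point. The paper itself gives no proof of this proposition (it is quoted from Lam's \emph{Lectures on Modules and Rings}, Proposition 8.7), and your element-chaining argument is essentially the standard one found there.
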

\begin{prop}[{\cite[Proposition 1.3.7]{bprbook}}]\label{pro137} Let $M$ be a right $R$-module and
$M\leq V\leq{E}(M)$. Then $M\leq^\emph{den}V$ if and only if $V\leq\widetilde{E}(M)$.
\end{prop}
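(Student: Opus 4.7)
The plan is to translate both sides of the biconditional into statements about $\text{Hom}_R(V/M, E(M))$ using Proposition \ref{pro138}(b), and then connect them via the defining relation $\widetilde{E}(M) = \mathbf{r}_{E(M)}(\mathbf{l}_T(M))$. Since $V \leq E(M)$ and $E(M)$ is injective, we have $E(V) = E(M)$, so Proposition \ref{pro138}(b) gives the identification
\[
M \leq^{\text{den}} V \iff \text{Hom}_R(V/M, E(M)) = 0.
\]
Both directions then become quick translations between this Hom condition and the right-annihilator description of $\widetilde{E}(M)$.

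For the ``only if'' direction, I would take any $\vartheta \in T$ with $\vartheta(M) = 0$. Since $M$ lies in the kernel, $\vartheta|_V$ induces a well-defined $R$-homomorphism $V/M \to E(M)$, which must vanish by the Hom condition. Therefore $\vartheta(V) = 0$ for every $\vartheta \in \mathbf{l}_T(M)$, which is precisely the condition $V \subseteq \mathbf{r}_{E(M)}(\mathbf{l}_T(M)) = \widetilde{E}(M)$.

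For the ``if'' direction, I would take any $f \colon V/M \to E(M)$ and compose with the canonical surjection $V \to V/M$ to obtain $\tilde f \colon V \to E(M)$ vanishing on $M$. Using the injectivity of $E(M)$, I would extend $\tilde f$ to an endomorphism $\vartheta \in T$. Then $\vartheta \in \mathbf{l}_T(M)$, and the hypothesis $V \subseteq \widetilde{E}(M) = \mathbf{r}_{E(M)}(\mathbf{l}_T(M))$ forces $\vartheta(V) = 0$, so $\tilde f = 0$ and hence $f = 0$, giving $\text{Hom}_R(V/M, E(M)) = 0$.

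The argument is entirely formal, so there is no substantive obstacle; the only step that requires care is the injective-extension step in the reverse direction, and it is precisely this step that makes the inclusion $V \leq E(M)$ (as opposed to an abstract overmodule) essential. The crux of the proof is the observation that $\widetilde{E}(M)$ is, by construction, exactly the set on which every ``$M$-annihilating'' endomorphism of $E(M)$ vanishes, and this matches the Hom-vanishing characterization of denseness once injectivity of $E(M)$ is invoked to lift maps $V \to E(M)$ to elements of $T$.
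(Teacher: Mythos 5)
Your proof is correct: the paper itself cites this result from \cite[Proposition 1.3.7]{bprbook} without giving a proof, and your argument is the standard one, translating density into the vanishing of $\mathrm{Hom}_R(V/M,E(M))$ via Proposition \ref{pro138}(b) and matching that against the annihilator description $\widetilde{E}(M)=\mathbf{r}_{E(M)}\left(\mathbf{l}_T(M)\right)$. The only cosmetic point is your justification of $E(V)=E(M)$: this follows not merely from $V\leq E(M)$ with $E(M)$ injective, but from the fact that $M\leq^{\text{ess}}E(M)$ forces $V\leq^{\text{ess}}E(M)$, so $E(M)$ is an injective essential extension of $V$.
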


We remind of some important characterizations of the rational hull of a module. 
\begin{prop}\label{rationalhull} The following hold true for a right $R$-module $M$ and $T=\emph{End}_R(E(M))$:
\begin{enumerate}
\item [(i)]\emph{({\cite[Exercises 5]{lambek}})} $\widetilde E(M)=\{x \in E(M)|\,~ \vartheta|_M=1_M ~\emph{with} ~\vartheta\in T\ ~ \Rightarrow \  \vartheta(x)=x\}$.
\item [(ii)]\emph{({\cite[Proposition 8.16]{l}})} $\widetilde E(M)=\{x\in E(M)\,|\,~ \forall y\in E(M)\! \setminus\!\{0\},~ y\cdot x^{-1}M\neq0\}$.
\end{enumerate}
\end{prop}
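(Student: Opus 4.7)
My plan is to derive both identities directly from the definition $\widetilde E(M)=\{x \in E(M)\mid \vartheta\in T,\,\vartheta(M)=0 \Rightarrow \vartheta(x)=0\}$ that is recalled in the introduction, exploiting the fact that $T$ is a ring so that $1_{E(M)}\pm\vartheta$ again lies in $T$ whenever $\vartheta$ does.

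For part (i), the idea is a change of variable inside $T$. For $\vartheta\in T$, the condition $\vartheta|_M=1_M$ is literally equivalent to $(1_{E(M)}-\vartheta)|_M=0$, and $\vartheta(x)=x$ is equivalent to $(1_{E(M)}-\vartheta)(x)=0$. Since $\psi\mapsto 1_{E(M)}-\psi$ is a bijection of $T$ onto itself, the set on the right-hand side of (i) is precisely the set of $x\in E(M)$ such that $\psi(M)=0$ implies $\psi(x)=0$ for every $\psi\in T$, which is $\widetilde E(M)$ by the definition quoted above. There is no real obstacle here beyond book-keeping.

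For part (ii), I would prove the two inclusions separately. For $(\supseteq)$, take $x\in E(M)$ with $y\cdot x^{-1}M\neq 0$ for every nonzero $y\in E(M)$, and suppose $\vartheta\in T$ satisfies $\vartheta(M)=0$. Setting $y:=\vartheta(x)$, for any $r\in x^{-1}M$ we have $xr\in M$, and hence $yr=\vartheta(xr)=0$; thus $y\cdot x^{-1}M=0$, which forces $y=0$ and so $\vartheta(x)=0$, giving $x\in\widetilde E(M)$. For $(\subseteq)$, I argue contrapositively: if some nonzero $y\in E(M)$ satisfies $y\cdot x^{-1}M=0$, define $\varphi\colon M+xR\to E(M)$ by $\varphi(m+xr)=yr$. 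The main subtlety here is well-definedness: if $m+xr=m'+xr'$, then $x(r-r')=m'-m\in M$, so $r-r'\in x^{-1}M$ and $y(r-r')=0$. Now $\varphi|_M=0$ while $\varphi(x)=y\neq 0$, and extending $\varphi$ to $\widetilde\varphi\in T$ via injectivity of $E(M)$ produces an element of $T$ killing $M$ but not $x$, so $x\notin \widetilde E(M)$.

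The only step I would call a genuine obstacle is arranging the well-defined extension $\varphi$ in the $(\subseteq)$ direction of (ii); after that the proof is mechanical. One could instead approach (ii) via Propositions \ref{pro138}--\ref{pro137}, translating density of $M$ in $M+xR$ into an element-wise criterion in terms of the right ideals $x^{-1}M$, but I prefer the direct construction above because it exhibits explicitly the endomorphism of $E(M)$ that witnesses failure when $x\notin\widetilde E(M)$.
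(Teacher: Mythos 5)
Your proof is correct. Note that the paper itself offers no proof of this proposition: it is recalled from the literature with citations to Lambek's and Lam's books, so there is no in-paper argument to compare against line by line. That said, your two arguments are exactly the standard ones. In (i), the substitution $\psi=1_{E(M)}-\vartheta$ is a bijection of $T$ onto itself that converts the condition $\vartheta|_M=1_M$, $\vartheta(x)=x$ into $\psi(M)=0$, $\psi(x)=0$, so the set in (i) coincides with the definition of $\widetilde E(M)$ quoted in the introduction; nothing more is needed. In (ii), the $(\supseteq)$ direction is a clean direct computation, and the $(\subseteq)$ direction correctly handles the one genuine issue, namely the well-definedness of $\varphi(m+xr)=yr$ on $M+xR$, which follows precisely because $y\cdot x^{-1}M=0$; the extension to an element of $T$ by injectivity of $E(M)$ then witnesses $x\notin\widetilde E(M)$. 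It is worth observing that this same construction (a map $N+xR\to K$ sending $n+xr\mapsto yr$, then extended by injectivity) is the engine of the paper's own proofs of Proposition \ref{ndense}((c)$\Rightarrow$(a)) and Proposition \ref{rmdr2}, so your argument is fully consistent with the techniques the paper uses elsewhere; part (ii) is in effect the special case $M:=R$, $N:=x^{-1}M$, $K:=E(M)$ of that machinery, packaged as a direct verification.
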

\vspace{0.2cm}

\section{The rational hull of a module}

As the injective hull of a module $M$ is the minimal injective module including $M$, the next result shows that the rational hull of a module $M$ is the minimal rationally complete module including $M$. 
Recall that a right $R$-module $M$ is said to be \emph{rationally complete} if it has no proper rational (or dense) extensions, or equivalently $\widetilde{E}(M)=M$.
Thus, the rational hull $\widetilde{E}(M)$ of a module $M$ is rationally complete. 

\begin{thm}\label{racom} The following conditions are equivalent for right $R$-modules $M$ and $F$:
\begin{enumerate}
\item [(a)] $F$ is maximal dense over $M$;
\item [(b)] $F$ is rationally complete, and is dense over $M$;
\item [(c)] $F$ is minimal rationally complete, and is essential over $M$.
\end{enumerate}
Note that a right $R$-module $F$ is exactly the {rational hull} of a module $M$ if $F$ satisfies any one of the above equivalent conditions.
\end{thm}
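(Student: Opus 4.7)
My plan is to prove (a) $\Leftrightarrow$ (b) and (b) $\Leftrightarrow$ (c), using Proposition~\ref{pro87}(ii) (the sandwich/transitivity property of density) as the main formal tool, with a separate construction for the one implication that does not follow from it directly.

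For (a) $\Leftrightarrow$ (b), I would argue as follows. Assuming (a), rational completeness follows because any dense extension $F\leq^{\text{den}} F'$ would, together with $M\leq^{\text{den}} F$, yield $M\leq^{\text{den}} F'$ by Proposition~\ref{pro87}(ii), forcing $F'=F$ by maximality. Conversely, assuming (b), if $F\leq F'$ with $M\leq^{\text{den}} F'$, the other direction of Proposition~\ref{pro87}(ii) produces $F\leq^{\text{den}} F'$, and rational completeness of $F$ gives $F'=F$. For (b) $\Rightarrow$ (c), density of $M$ in $F$ yields essentiality (take $x=y$ in the definition of dense submodule), and for minimality, any rationally complete $F'$ with $M\leq F'\leq F$ satisfies $F'\leq^{\text{den}} F$ by Proposition~\ref{pro87}(ii), which then forces $F'=F$.

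The main obstacle is (c) $\Rightarrow$ (b), since it demands constructing density out of essentiality plus a completeness/minimality hypothesis. My plan is to exploit $M\leq^{\text{ess}} F$ to identify $E(F)=E(M)$ and embed $F\hookrightarrow E(M)$ over $M$, and then show $\widetilde{E}(M)\subseteq F$ inside $E(M)$. Suppose to the contrary that there exists $x\in\widetilde{E}(M)\setminus F$; I will verify that $F\leq^{\text{den}} F+xR$, contradicting rational completeness. By Proposition~\ref{pro138}, this reduces to showing every homomorphism $\phi\colon F+xR\to E(M)$ with $\phi|_F=0$ is zero. Such a $\phi$ vanishes on $M$; since $x\in\widetilde{E}(M)$, Proposition~\ref{pro137} gives $M\leq^{\text{den}} M+xR$, so a second appeal to Proposition~\ref{pro138} forces $\phi|_{M+xR}=0$, hence $\phi(x)=0$, hence $\phi(f+xr)=\phi(f)+\phi(x)r=0$ for all $f\in F$ and $r\in R$. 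This contradiction establishes $\widetilde{E}(M)\subseteq F$; because $\widetilde{E}(M)$ is itself a rationally complete extension of $M$, the minimality hypothesis forces $F=\widetilde{E}(M)$, from which $M\leq^{\text{den}} F$ is immediate.

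The closing note is then a formality: each of (a)--(c) pins $F$ down as a rationally complete dense extension of $M$, which is unique up to isomorphism over $M$ and coincides with $\widetilde{E}(M)$ by the characterization recalled in the introduction.
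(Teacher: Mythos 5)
Your proof is correct, but the hard implication is handled by a genuinely different argument than the paper's. The easy directions are the same in both treatments: you and the paper both get rational completeness of a maximal dense extension from Proposition~\ref{pro137}, and both get minimality in (b)$\Rightarrow$(c) from the transitivity statement Proposition~\ref{pro87}(ii) plus rational completeness of the intermediate module. The difference is in the direction that starts from (c). The paper proves (c)$\Rightarrow$(a) by invoking the extension property of rationally complete modules (citing Lam's Theorem 8.24, in effect Proposition~\ref{relationship3}): since $\mathrm{Hom}_R(\widetilde{E}(M)/M,E(F))=0$, the inclusion $M\hookrightarrow F$ extends to a monomorphism $\rho\colon\widetilde{E}(M)\to F$, and minimality then forces $\widetilde{E}(M)=F$. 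You instead work entirely inside $E(M)$ and show directly that $\widetilde{E}(M)\subseteq F$, by checking that any $x\in\widetilde{E}(M)\setminus F$ would make $F+xR$ a proper dense extension of $F$ (every $\phi\colon F+xR\to E(M)$ killing $F$ kills $M$, hence kills $M+xR$ by density of $M$ in $M+xR$, hence kills $x$), contradicting rational completeness; minimality then gives $F=\widetilde{E}(M)$ as before. Your route is more self-contained --- it uses only Propositions~\ref{pro137} and \ref{pro138} and avoids the forward reference to the extension theorem, which in this paper is only established later as Proposition~\ref{relationship3} --- at the cost of the explicit $F+xR$ construction; the paper's route is shorter once the extension property is available and makes visible that this property is the real engine behind the equivalence. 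One small point you leave implicit and should record: in applying Proposition~\ref{pro138} to $F\leq F+xR$ you need $E(F+xR)=E(M)$, which holds because $M\leq^{\text{ess}}F+xR\leq E(M)$.
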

\begin{proof} (a)$\Rightarrow$(b) From Proposition \ref{pro137}, it is easy to see that $F$ has no proper dense extension. So, $F$ is a rationally complete module.
(b)$\Rightarrow$(c) Let $F'$ be rationally complete such that $M\leq F'\leq F$.
Since $M\leq^\text{den}F$,  from Proposition \ref{pro87}(ii) $M\leq^\text{den} F'\leq^\text{den}F$.
Thus, from Proposition \ref{pro137} $F\leq^\text{den}\widetilde{E}(F')=F'$ because $F'$ is rationally complete.
Therefore $F=F'$.
(c)$\Rightarrow$(a) Let $F$ be minimal rationally complete over $M$.
Since $F$ is essential over $M$, $M\leq F\leq E(M)$.
Since $M\leq^\text{den}\widetilde{E}(M)$, $\text{Hom}_R(\widetilde{E}(M)/M, E(M))=0$.
Also, since $E(F)=E(M)$, $\text{Hom}_R(\widetilde{E}(M)/M, E(F))=0$.
From \cite[Theorem 8.24]{l}, an inclusion map $\iota: M\rightarrow F$ extends to $\rho: \widetilde{E}(M)\rightarrow F$ as $F$ is rationally complete (see also Proposition \ref{relationship3}).
Note that $\rho$ is a monomorphism. Since $\widetilde{E}(M)$ is rationally complete and $F$ is minimal, $\widetilde{E}(M)=F$.
\end{proof}

The next example shows that the condition ``essential over $M$" in Theorem \ref{racom}(c) is not superfluous.
 \begin{exam}
Let $M = \mathbb{Z}$ and $F=\mathbb{Z}_{(p)}\oplus\mathbb{Z}_p$ be right $\mathbb{Z}$-modules where $\mathbb{Z}_{(p)}$ is the localization of $\mathbb{Z}$ at the prime ideal $(p)$.
It is easy to see that $M$ is not essential in $F$, so $F$ is not a rational hull of $M$.
In fact, $F$ is minimal rationally complete over $M$: 
From \cite[Example 8.21]{l}, $F$ is rationally complete because  $F$ is the rational hull of $L=\mathbb{Z} \oplus \mathbb{Z}_p$.
It is enough to show that $F$ is minimal over $M$:  Let $K$ be a rationally complete module such that $M \leq K \leq F$. Hence $1=\text{u.dim}(M) \le \text{u.dim}(K) \le \text{u.dim}(F)=2$. Assume that $\text{u.dim}(K)=1.$ Then $M \leq^\text{ess} K$, and hence $K$ is nonsingular since $M$ is nonsingular. Thus $M \le^\text{den} K$, which implies that $K \cong \mathbb{Q}$ since $K$ is rationally complete and $\widetilde{E}(M)=\mathbb{Q}$. It follows that $\mathbb{Q}$ can be embedded into $F=\mathbb{Z}_{(p)} \oplus \mathbb{Z}_p$, a contradiction. Therefore, $\text{u.dim}(K)=2.$ Then $K \leq^\text{ess} F$, and hence $K \cap \mathbb{Z}_p \neq 0$. Thus $\mathbb{Z}_p \le K$, which implies that $L=\mathbb{Z} \oplus \mathbb{Z}_p \le K$. Note that $L \le^\text{den} F$ since $F=\widetilde{E}(L)$. Hence $K \le^\text{den} F$, so that $K=F$ due to the fact that $K$ is rationally complete.
\end{exam}

We provide another characterization for the rational hull of a module using the relatively dense property.
A right ideal $I$ of a ring $R$ is called \emph{relatively dense to a right $R$-module} $M$ (or $M$-\emph{dense}) in  $R$ if for any $r\in R$ and $0\neq m\in M$, $m\cdot r^{-1}I\neq 0$. It is denoted by $I\leq^\text{den}_M R$.

\begin{thm}\label{char}
	Let $M$ be a right $R$-module. Then $\widetilde{E}(M)=\{x \in E(M)\,|\,x^{-1}M \le^{\emph{den}}_M R\}$.
\end{thm}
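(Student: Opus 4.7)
The plan is to derive this from Proposition~\ref{rationalhull}(ii), which describes $\widetilde{E}(M)$ as $\{x\in E(M)\mid y\cdot x^{-1}M\neq 0\text{ for all }0\neq y\in E(M)\}$. The key algebraic observation I will use throughout is that for any $r\in R$,
\[
r^{-1}(x^{-1}M)=\{s\in R\mid (xr)s\in M\}=(xr)^{-1}M,
\]
so the $M$-density of $x^{-1}M$ unpacks to: for each $r\in R$ and each $0\neq m\in M$, there exists $s\in R$ with $xrs\in M$ and $ms\neq 0$.

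For the forward inclusion, I would take $x\in\widetilde{E}(M)$ and verify $x^{-1}M\leq^{\text{den}}_M R$. Given $r\in R$ and $0\neq m\in M$, note that $xr\in\widetilde{E}(M)$ and apply the defining density of $M$ in $\widetilde{E}(M)$ to the pair $(xr,m)$: this produces $s\in R$ with $xrs\in M$ and $ms\neq 0$, which is exactly what the reformulation above requires.

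For the reverse inclusion, I would assume $x\in E(M)$ satisfies $x^{-1}M\leq^{\text{den}}_M R$ and verify the criterion of Proposition~\ref{rationalhull}(ii). Fix $0\neq y\in E(M)$. Since $M\leq^{\text{ess}}E(M)$, there is $r\in R$ with $0\neq yr\in M$; set $m=yr$. Applying the $M$-density hypothesis to this $r$ and this $m$, I obtain $s\in R$ with $xrs\in M$ and $ms\neq 0$. Setting $t=rs$ gives $xt\in M$ and $yt=ms\neq 0$, so $y\cdot x^{-1}M\neq 0$. By Proposition~\ref{rationalhull}(ii), $x\in\widetilde{E}(M)$.

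There is no real obstacle: both directions are bookkeeping once one recognizes the identity $r^{-1}(x^{-1}M)=(xr)^{-1}M$ and uses essentiality of $M$ in $E(M)$ to replace an arbitrary nonzero $y\in E(M)$ by an element of $M$ of the form $yr$. The only small subtlety is to be careful that the witness $s$ produced by $M$-density must be combined with the $r$ coming from essentiality into the single witness $t=rs$ used to test against $x^{-1}M$.
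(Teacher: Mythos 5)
Your proposal is correct and follows essentially the same route as the paper's proof: the forward inclusion applies the density of $M$ in $\widetilde{E}(M)$ to the pair $(xr,m)$, and the reverse inclusion uses essentiality of $M$ in $E(M)$ to replace $y$ by $yr\in M$ and then invokes the relative density of $x^{-1}M$, concluding via Proposition \ref{rationalhull}(ii). The identity $r^{-1}(x^{-1}M)=(xr)^{-1}M$ that you isolate is exactly the bookkeeping the paper performs inline.
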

\begin{proof}
	Let $x \in \widetilde{E}(M)$ be arbitrary. Consider a right ideal $x^{-1}M \leq R$.
Let	$0 \neq m \in M$ and $r \in R$. Since $M \leq^{\text{den}} \widetilde{E}(M)$, there exists $s \in R$ such that $ms \neq 0$ and $(xr)s=x(rs) \in M$, that is, $rs\in x^{-1}M$. Hence $x^{-1}M \leq^{\text{den}}_M R$. 
	
For the reverse inclusion, let $x \in E(M)$ such that $x^{-1}M \leq^{\text{den}}_M R$. For an arbitrary nonzero element $0 \neq y \in E(M)$, it suffices to show that $y\cdot x^{-1}M \neq 0$. 
	As $M \leq^{\text{ess}} E(M)$, $0\neq yr\in M$ for some  $r \in R$. 
	Since $x^{-1}M \leq^{\text{den}}_M R$, there exists $s \in R$ such that $yrs \neq 0$ and $rs \in x^{-1}M$. Hence $0 \neq yrs \in y\cdot x^{-1}M$. Therefore  $x \in \widetilde{E}(M)$.
\end{proof}

 The next definition was shown in \cite[pp79]{fl} as $N\leq M (K)$, so we call a submodule $N$ relatively dense to a module $K$ in a module $M$. (For details, see \cite{ltz1}.)

\begin{defn}\label{redense} A submodule $N$ of a right $R$-module $M$ is said to be \emph{relatively dense to a right $R$-module} $K$ (or $K$-\emph{dense}) in $M$ if for any $m\in M$ and $0\neq x\in K$, $x\cdot m^{-1}N\neq 0$, denoted by $N \leq^\text{den}_K M$. Note that $N$ is $M$-dense in $M$ if and only if $N$ is dense in $M$. 
\end{defn}

 We provide some characterizations of the relative density property. One can compare the following characterizations to Proposition \ref{pro138}. The equivalence (a)$\Leftrightarrow$(c) in the following proposition is provided by \cite[pp79]{fl}.

\begin{prop}\label{ndense} The following are equivalent for right $R$-modules $M, K$ and $N\leq M$:
\begin{enumerate}
\item [(a)] $N$ is $K$-dense in $M$;
\item [(b)] $\emph{Hom}_R(M/N, E(K))=0$;
\item [(c)] for any submodule $P$ such that $N\leq P\leq M$, $\emph{Hom}_R(P/N, K)=0$.
\end{enumerate}
\end{prop}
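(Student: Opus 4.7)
The plan is to mirror the structure of Proposition \ref{pro138} (which is the case $K=M$) and establish the cycle $(a)\Rightarrow(b)\Rightarrow(c)\Rightarrow(a)$. The only new ingredient over the classical proof is that one must pass between $K$ and its injective hull $E(K)$ whenever essentiality is needed.

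For $(a)\Rightarrow(b)$, I would argue contrapositively. If there is a nonzero $f: M/N \to E(K)$, pick $m\in M$ with $f(\bar m)\neq 0$, and use $K\leq^{\text{ess}} E(K)$ to find $r\in R$ with $0\neq f(\bar m)r = f(\overline{mr})\in K$. Replacing $m$ by $mr$, I may assume $x:=f(\bar m)\in K\setminus\{0\}$. Applying (a) to this $m$ and $x$ yields $s\in R$ with $ms\in N$ and $xs\neq 0$. But $ms\in N$ gives $f(\overline{ms})=0$, while $R$-linearity gives $f(\overline{ms})=f(\bar m)s=xs\neq 0$, a contradiction.

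For $(b)\Rightarrow(c)$, given $N\leq P\leq M$ and $g: P/N\to K$, compose with the inclusion $\iota:K\hookrightarrow E(K)$ and use injectivity of $E(K)$ to extend $\iota\circ g$ to $\tilde g:M/N\to E(K)$. Assumption (b) forces $\tilde g=0$, hence $\iota\circ g=0$, and since $\iota$ is a monomorphism, $g=0$.

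For $(c)\Rightarrow(a)$, again contrapositively: if (a) fails, pick $m\in M$ and $0\neq x\in K$ with $x\cdot m^{-1}N=0$; that is, $m^{-1}N\subseteq \mathbf{r}_R(x)$. Set $P:=mR+N$ and define $g:P\to K$ by $g(mr+n):=xr$ for $r\in R$ and $n\in N$. The delicate point — and the main obstacle of the proof — is well-definedness: if $mr+n=mr'+n'$, then $m(r-r')\in N$, so $r-r'\in m^{-1}N\subseteq \mathbf{r}_R(x)$, whence $xr=xr'$. $R$-linearity is then immediate, and $g$ vanishes on $N$ (since $mr+n\in N$ forces $mr\in N$, hence $xr=0$), so $g$ descends to $\bar g: P/N\to K$. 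Because $\bar g(\overline{m})=x\neq 0$, $\bar g$ is nonzero, contradicting (c). This completes the cycle.
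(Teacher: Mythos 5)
Your proposal is correct and follows essentially the same route as the paper's proof: the same contrapositive/contradiction arguments for (a)$\Rightarrow$(b) and (c)$\Rightarrow$(a) (including the map $g(mr+n)=xr$ on $mR+N$, whose well-definedness you verify more explicitly than the paper does), and the same injectivity-extension argument for (b)$\Rightarrow$(c). No gaps.
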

\begin{proof} (a)$\Rightarrow$(b) Assume that there exists $0\neq\alpha \in\text{Hom}_R(M, E(K))$ with $\alpha N=0$.
Since $\alpha M\cap K\neq 0$, there exist $x\in M$ and $0\neq y\in K$ such that $\alpha(x)=y$.
Since $N$ is $K$-{dense} in $M$, there exists $r\in R$ such that $xr\in N$ and $0\neq yr$.
However, $0=\alpha(xr)=\alpha(x)r=yr\neq0$, a contradiction. Hence $\text{Hom}_R(M/N, E(K))=0$.

(b)$\Rightarrow$(c) Assume that for any submodule $P$ such that $N\leq P\leq M$, there exists $0\neq\eta\in\text{Hom}_R(P/N, K)$.
Then by the injectivity of $E(K)$, we can extend $\eta$ to a nonzero homomorphism from $M/N$ to $E(K)$, a contradiction.
Hence $\text{Hom}_R(P/N, K)=0$.

(c)$\Rightarrow$(a) Assume that $y\cdot x^{-1}N=0$ for some $x\in M$ and $0\neq y\in K$.
We define $\gamma: N+xR \rightarrow K$ given by $\gamma(n+xr)=yr$ for $n\in N$ and $r\in R$.
It is easy to see that $\gamma$ is a well-defined $R$-homomorphism vanishing on $N$.
Since $N\leq N+xR\leq M$, by hypothesis $0=\gamma(x)=y\neq0$, a contradiction. Thus $N$ is $K$-dense in $M$.
\end{proof}

We obtain another characterization of the relative density property related to homomorphisms.
\begin{prop}\label{rmdr2} Let $M, K$ be right $R$-modules. Then a submodule $N$ is $K$-dense in $M$ if and only if $\mathbf{l}_{H}(N)=0$ where $H=\emph{Hom}_R(M, E(K))$.
\end{prop}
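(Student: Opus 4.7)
The plan is to obtain this as an immediate reformulation of Proposition \ref{ndense}, specifically the equivalence (a)$\Leftrightarrow$(b), which states that $N$ is $K$-dense in $M$ if and only if $\text{Hom}_R(M/N, E(K)) = 0$. It therefore suffices to show that $\mathbf{l}_H(N) = 0$ is equivalent to $\text{Hom}_R(M/N, E(K)) = 0$.

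To do this, I would set up a natural bijection between the abelian groups $\mathbf{l}_H(N)$ and $\text{Hom}_R(M/N, E(K))$ as follows. Let $\pi: M \to M/N$ denote the canonical projection. Given $\alpha \in \mathbf{l}_H(N)$, the condition $\alpha N = 0$ guarantees that $\alpha$ factors uniquely through $\pi$, yielding $\bar{\alpha}\in\text{Hom}_R(M/N, E(K))$ with $\bar{\alpha}\circ\pi = \alpha$. Conversely, any $\beta\in\text{Hom}_R(M/N, E(K))$ gives $\beta\circ\pi \in H$, which clearly vanishes on $N$. The assignments $\alpha\mapsto\bar{\alpha}$ and $\beta\mapsto\beta\circ\pi$ are mutually inverse, so the two groups are isomorphic. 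In particular, one is zero exactly when the other is, and combining with Proposition \ref{ndense} gives the stated equivalence.

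There is no real obstacle here; the only thing to verify carefully is the universal property used to produce $\bar{\alpha}$, which is standard, and to confirm that the correspondence is truly bijective (not merely surjective one way). The argument is essentially a translation between two equivalent formulations of ``every $R$-homomorphism $M\to E(K)$ that kills $N$ must be zero,'' once rewritten in the annihilator notation of the paper.
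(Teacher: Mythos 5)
Your proof is correct, but it takes a different route from the paper. The paper proves Proposition \ref{rmdr2} directly by an element-wise argument: for the forward direction it picks a hypothetical $0\neq\varphi\in\mathbf{l}_H(N)$, uses $K\leq^{\text{ess}}E(K)$ to push $\varphi(m)$ into $K$, and then invokes the defining property of relative density to reach a contradiction; for the converse it builds the map $\gamma(n+mt)=xt$ on $N+mR$ and extends it to all of $M$ by injectivity of $E(K)$, producing a nonzero element of $\mathbf{l}_H(N)$. In effect the paper re-runs the same machinery already used to prove Proposition \ref{ndense}. You instead observe that $\mathbf{l}_H(N)$ is canonically isomorphic (via factorization through $M\to M/N$) to $\mathrm{Hom}_R(M/N,E(K))$, so that $\mathbf{l}_H(N)=0$ is literally a restatement of condition (b) of Proposition \ref{ndense}, and the result follows with no new work. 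Your reduction is shorter and makes transparent that Proposition \ref{rmdr2} is a notational reformulation of Proposition \ref{ndense}(b); the paper's version has the mild advantage of being self-contained and of exhibiting explicitly how a nonzero annihilating map is constructed, which is the same construction reused later (e.g.\ in Theorem \ref{rationally complete}). The one point you flag--that the correspondence $\alpha\mapsto\bar{\alpha}$ is genuinely bijective--is indeed the only thing to check, and it holds by the universal property of the quotient module, so there is no gap.
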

\begin{proof}
Suppose $N$ is $K$-dense in $M$. Assume that $0\neq \varphi\in H$ such that $\varphi N=0$.
Then there exists $m\in M\setminus N$ such that $\varphi (m)\neq 0$. 
Since $\varphi (m)\in E(K)$, $0\neq \varphi (m)r\in K$ for some $r\in R$.
Hence  there exists $s\in R$ such that $mrs\in N$ and $\varphi (m)rs\neq0$  because $N\leq^\text{den}_K M$. That yields a contradiction that $0\neq \varphi (m)rs=\varphi (mrs)\in \varphi N=0$. Therefore $\mathbf{l}_{H}(N)=0$.
Conversely, assume that $x \cdot m^{-1}N = 0$ for some $0\neq x\in K$ and $m\in M$.
We define $\gamma : N+mR\rightarrow E(K)$ by $\gamma(n+mt)=xt$ for $n\in N$ and $t\in R$.
Clearly, $\gamma$ is a nonzero $R$-homomorphism vanishing on $N$.
Also, there exists $\overline{\gamma}: M\rightarrow E(K)$ such that $\overline{\gamma}|_{N+mR}=\gamma$.
Since $0=\overline{\gamma}N$, $0\neq\overline{\gamma}\in\mathbf{l}_{H}(N)$, a contradiction. Therefore $x \cdot m^{-1}N \neq 0$.
\end{proof}

If $M=R$, the following result is directly provided.
\begin{cor}[{\cite[Proposition 1.1]{sto}}]\label{rmdr} Let $K$ be a right $R$-module and $I$ be a right ideal of a ring $R$. Then $I$ is $K$-dense in  $R$ if and only if $\mathbf{l}_{E(K)}(I)=0$.
\end{cor}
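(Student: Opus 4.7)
The plan is to derive this as an immediate consequence of Proposition \ref{rmdr2} by specializing $M=R$ and $N=I$. The only substantive point is to translate the annihilator condition from the $\text{Hom}$-module $H=\text{Hom}_R(R,E(K))$ to $E(K)$ itself.

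First, I would invoke Proposition \ref{rmdr2} with $M=R$: $I$ is $K$-dense in $R$ if and only if $\mathbf{l}_H(I)=0$, where $H=\text{Hom}_R(R,E(K))$. Next, I would use the canonical isomorphism of right $R$-modules $\Phi: H \to E(K)$ defined by $\Phi(\varphi)=\varphi(1)$, whose inverse sends $x\in E(K)$ to the homomorphism $r\mapsto xr$. Under $\Phi$, for any right ideal $I$ and any $\varphi\in H$, we have $\varphi I=0$ precisely when $\varphi(a)=\varphi(1)a=\Phi(\varphi)\cdot a=0$ for every $a\in I$, i.e., $\Phi(\varphi)\in \mathbf{l}_{E(K)}(I)$. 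Therefore $\Phi$ restricts to a bijection between $\mathbf{l}_H(I)$ and $\mathbf{l}_{E(K)}(I)$, and in particular one is zero if and only if the other is.

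Combining the two equivalences yields the claim: $I\leq^{\text{den}}_K R$ iff $\mathbf{l}_{E(K)}(I)=0$. There is no real obstacle here; the only thing to watch is that the $\text{Hom}$-to-$E(K)$ identification is compatible with the right $R$-action so that annihilators transport correctly, which is immediate from $\varphi(1)a=\varphi(a)$.
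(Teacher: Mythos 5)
Your proposal is correct and matches the paper's intent exactly: the paper states only that the corollary ``is directly provided'' by Proposition \ref{rmdr2} upon setting $M=R$, which is precisely your specialization together with the canonical identification $\mathrm{Hom}_R(R,E(K))\cong E(K)$ via $\varphi\mapsto\varphi(1)$. Your explicit check that this identification carries $\mathbf{l}_H(I)$ onto $\mathbf{l}_{E(K)}(I)$ is the (routine) detail the paper leaves implicit.
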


\begin{prop}\label{rmdr3} Let $K$ be a right $R$-module and $I$ be an ideal of a ring $R$. Then $\mathbf{l}_{K}(I)=0$ if and only if $\mathbf{l}_{E(K)}(I)=0$.
\end{prop}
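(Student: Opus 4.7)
The plan is to exploit two ingredients: the essentiality of $K$ in $E(K)$, and the two-sided nature of the ideal $I$ (which is exactly what makes the left annihilator a submodule rather than just a subset).

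First I would dispatch the easy direction. If $\mathbf{l}_{E(K)}(I)=0$, then since $K\subseteq E(K)$ one has
\[
\mathbf{l}_K(I)=K\cap \mathbf{l}_{E(K)}(I)\subseteq \mathbf{l}_{E(K)}(I)=0,
\]
so $\mathbf{l}_K(I)=0$.

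For the converse, assume $\mathbf{l}_K(I)=0$ and suppose for contradiction that there is some $0\neq x\in \mathbf{l}_{E(K)}(I)$, i.e.\ $xI=0$. The central observation is that $\mathbf{l}_{E(K)}(I)$ is a right $R$-submodule of $E(K)$ precisely because $I$ is a two-sided ideal: for any $r\in R$ and any $x$ with $xI=0$, one has $(xr)I=x(rI)\subseteq xI=0$ since $rI\subseteq I$. Hence $xR\subseteq \mathbf{l}_{E(K)}(I)$.

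Now apply essentiality: since $K\leq^{\text{ess}}E(K)$ and $xR\neq 0$, there exists $r\in R$ with $0\neq xr\in K$. But then $xr\in K\cap \mathbf{l}_{E(K)}(I)=\mathbf{l}_K(I)=0$, a contradiction. Therefore $\mathbf{l}_{E(K)}(I)=0$.

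The only subtlety — and it is less an obstacle than a point to highlight — is the use of the two-sidedness of $I$ when passing from $x$ to $xr$; this is where Proposition~\ref{rmdr3} really diverges from Corollary~\ref{rmdr}, whose analogous statement required only a right ideal together with the relative density hypothesis. Everything else is routine.
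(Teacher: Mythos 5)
Your proof is correct and follows essentially the same route as the paper's: the easy direction by intersecting with $K$, and the converse by picking $0\neq x$ with $xI=0$, using essentiality of $K$ in $E(K)$ to find $0\neq xr\in K$, and observing $xrI=x(rI)\subseteq xI=0$ (which, as you rightly emphasize, is where the two-sidedness of $I$ enters). Your explicit remark on why $I$ must be an ideal rather than merely a right ideal is a helpful clarification of a step the paper leaves implicit.
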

\begin{proof} Since one direction is trivial, we need to show the other direction. Suppose $\mathbf{l}_{K}(I)=0$.
Assume that $\mathbf{l}_{E(K)}(I)\neq 0$. Then there exists $0\neq x\in E(K)$ such that $xI=0$. Also, $0\neq xr\in K$ for some $r\in R$  because $K\leq^\text{ess}E(K)$. Since $xrI\subseteq xI=0$, $0\neq xr\in\mathbf{l}_{K}(I)$, a contradiction.
Therefore $\mathbf{l}_{E(K)}(I)=0$.
\end{proof}

\begin{cor}[{\cite[Proposition 1.3.11(iv)]{bprbook}}]\label{rmdr4} Let $I$ be an ideal of a ring $R$. Then $I\leq^\emph{den} R_R$ if and only if $\mathbf{l}_{R}(I)=0$.
\end{cor}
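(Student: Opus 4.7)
The plan is to derive this corollary by specializing the two immediately preceding results to the case $K=R$ and chaining the resulting equivalences. First I would note the terminological point: by Definition \ref{redense}, saying $I \leq^{\text{den}} R_R$ is exactly the statement that $I$ is $R$-dense in the right $R$-module $R$, so the left-hand side of the biconditional fits the hypothesis of Corollary \ref{rmdr} with $K = R$.

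Next I would apply Corollary \ref{rmdr} with $K = R$ (which is legitimate since $I$ is in particular a right ideal of $R$) to obtain the equivalence
\[
I \leq^{\text{den}} R_R \iff \mathbf{l}_{E(R)}(I) = 0.
\]
Since $I$ is assumed to be a two-sided ideal, Proposition \ref{rmdr3} applies with $K = R$, giving
\[
\mathbf{l}_R(I) = 0 \iff \mathbf{l}_{E(R)}(I) = 0.
\]
Chaining these two biconditionals through the common middle term $\mathbf{l}_{E(R)}(I) = 0$ yields $I \leq^{\text{den}} R_R \Leftrightarrow \mathbf{l}_R(I) = 0$, which is the claimed statement.

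There is no real obstacle here — the result is a direct corollary of the two preceding results with $K$ taken to be the regular module $R_R$. The only thing to check, which is essentially bookkeeping, is that the two-sided ideal hypothesis on $I$ is only needed to invoke Proposition \ref{rmdr3}, while Corollary \ref{rmdr} needs only that $I$ is a right ideal; since an ideal is in particular a right ideal, both are applicable simultaneously and the proof is a one-line chain of equivalences.
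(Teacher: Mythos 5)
Your proof is correct and is exactly the paper's argument: the paper's own proof consists of the single line ``The proof also follows from Corollary \ref{rmdr} and Proposition \ref{rmdr3},'' i.e., specializing both to $K=R$ and chaining the equivalences through $\mathbf{l}_{E(R)}(I)=0$, just as you do. Your additional bookkeeping about where the two-sided ideal hypothesis is actually needed is accurate and slightly more explicit than the paper.
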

\begin{proof} The proof also follows from Corollary \ref{rmdr} and Proposition \ref{rmdr3}.
\end{proof}

Using Theorem \ref{char} and Corollary \ref{rmdr}, we obtain another characterization for the rational hull of a module. Also, using the characterization of the relatively dense property, new characterization for the rational hull of a module is provided.
\begin{cor}\label{char2} Let $M$ be a right $R$-module. Then the following statements hold true:
\begin{enumerate}
\item [(i)] \emph{({\cite[Proposition 1.4(b)]{sto}})} $\widetilde{E}(M)=\{x \in E(M)\,|\,\mathbf{l}_{E(M)}(x^{-1}M)=0\}$.
\item [(ii)]  $\widetilde{E}(M) =\{x\in E(M)\,|\,\emph{Hom}_R(R/x^{-1}M, E(M))=0\}$.
\end{enumerate}
\end{cor}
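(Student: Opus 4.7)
The plan is to deduce both statements as direct applications of Theorem \ref{char} combined with the characterizations of relative density developed earlier. Theorem \ref{char} already identifies $\widetilde{E}(M)$ as the set of $x \in E(M)$ for which $x^{-1}M \leq^{\text{den}}_M R$, so each part reduces to rewriting the condition ``$x^{-1}M$ is $M$-dense in $R$'' using a previously proven equivalent form.

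For part (i), I would apply Corollary \ref{rmdr} with $K = M$ and the right ideal $I = x^{-1}M$. That corollary gives $x^{-1}M \leq^{\text{den}}_M R$ if and only if $\mathbf{l}_{E(M)}(x^{-1}M) = 0$. Substituting this equivalence into the characterization provided by Theorem \ref{char} yields (i) immediately.

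For part (ii), I would apply Proposition \ref{ndense} in the special case where the ambient module is $R$ (so ``$M$'' in that proposition is replaced by $R_R$), with $K = M$ and $N = x^{-1}M$. The equivalence (a)$\Leftrightarrow$(b) there states that $x^{-1}M$ is $M$-dense in $R$ if and only if $\text{Hom}_R(R/x^{-1}M, E(M)) = 0$. Combining with Theorem \ref{char} produces (ii).

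Since both parts are essentially unpacking definitions via theorems already established, there is no real obstacle — the proof will be just two short appeals. The only thing to be careful about is that the relative density notion in Theorem \ref{char} (a right ideal being $M$-dense in $R$, as formulated just before Theorem \ref{char}) agrees with the notion in Definition \ref{redense} and Proposition \ref{ndense} when the ambient module is specialized to $R_R$; this compatibility should be noted briefly so that the citations to Corollary \ref{rmdr} and Proposition \ref{ndense} land on the correct statements.
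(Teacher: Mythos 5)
Your proposal is correct and follows exactly the route the paper takes: the paper's own proof simply cites Theorem \ref{char}, Corollary \ref{rmdr}, and Proposition \ref{ndense}, which are precisely the three results you invoke for the same purposes. Your remark about checking that the relative density of a right ideal in $R$ matches Definition \ref{redense} specialized to $M = R_R$ is a sensible extra precaution, but otherwise the argument is identical.
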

\begin{proof}
It directly follows from Theorem \ref{char}, Corollary \ref{rmdr}, and Proposition \ref{ndense}.
\end{proof}

Several new characterizations for the maximal right ring of quotients of a ring are provided as the following. 
\begin{thm} Let $R$ be a ring. Then the following statements hold true:
\begin{enumerate}
\item [(i)] A right ideal $I$ is dense in $R$ if and only if $\mathbf{l}_{E(R)}(I)=0$.
\item [(ii)] $Q(R)=\{x \in E(R)\,|\,x^{-1}R \leq^{\emph{den}} R\}$.
\item [(iii)] $Q(R)=\{x \in E(R)\,|\,\mathbf{l}_{E(R)}(x^{-1}R)=0\}$.
\item [(iv)] $Q(R)=\{x\in E(R)\,|\,\emph{Hom}_R(R/x^{-1}R, E(R))=0\}$.
\end{enumerate}
\end{thm}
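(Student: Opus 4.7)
The plan is to observe that all four parts are direct specializations of results already proved in the excerpt, applied to the case $M=R$, together with the identification $\widetilde{E}(R) = Q(R)$ noted in the introduction (where $Q(R)=\mathbf{r}_{E(R)}(\mathbf{l}_H(R))$ with $H = \text{End}_R(E(R))$, matching the description of $\widetilde{E}(M)$ when $M=R$).

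For part (i), I would invoke Corollary \ref{rmdr} with $K = R$: it states that a right ideal $I$ is $K$-dense in $R$ if and only if $\mathbf{l}_{E(K)}(I) = 0$. Taking $K = R$, the relative density condition ``$R$-dense in $R$'' collapses to the ordinary density condition (compare Definition \ref{redense}, which explicitly says $N$ is $M$-dense in $M$ iff $N \leq^{\text{den}} M$). This yields the equivalence of (i) immediately.

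For parts (ii), (iii), and (iv), I would specialize Theorem \ref{char} and Corollary \ref{char2} to $M = R$. Theorem \ref{char} gives $\widetilde{E}(R) = \{x \in E(R) : x^{-1}R \leq^{\text{den}}_R R\}$, and the relative-density condition $x^{-1}R \leq^{\text{den}}_R R$ reduces to the ordinary density $x^{-1}R \leq^{\text{den}} R$ (again by Definition \ref{redense}, since both the ``ambient'' module and the ``test'' module are $R$). Since $\widetilde{E}(R) = Q(R)$, this yields (ii). Part (iii) follows similarly from Corollary \ref{char2}(i) with $M = R$, and part (iv) from Corollary \ref{char2}(ii) with $M = R$.

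There is essentially no obstacle here: the proof is a bookkeeping exercise verifying that the generic statements about $\widetilde{E}(M)$ and $K$-density collapse to the claimed statements about $Q(R)$ and ordinary density when one sets $M = K = R$. The only mild point worth remarking on is the equality $\widetilde{E}(R) = Q(R)$, which is already recorded in the introduction by comparing the two descriptions $\widetilde{E}(M) = \mathbf{r}_{E(M)}(\mathbf{l}_T(M))$ and $Q(R) = \mathbf{r}_{E(R)}(\mathbf{l}_H(R))$.
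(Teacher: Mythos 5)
Your proposal is correct and coincides with the paper's own (implicit) proof: the paper states this theorem without argument precisely because it is the specialization $M=K=R$ of Theorem \ref{char}, Corollary \ref{rmdr}, and Corollary \ref{char2}, combined with the identification $\widetilde{E}(R)=\mathbf{r}_{E(R)}(\mathbf{l}_H(R))=Q(R)$ recorded in the introduction. Your bookkeeping — that $R$-density in $R_R$ collapses to ordinary density per the note in Definition \ref{redense} — is exactly the intended reduction.
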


We give characterizations for a rationally complete module.
\begin{thm}\label{rationally complete} The following conditions are equivalent for a right $R$-module $M$:
\begin{enumerate}
\item[(a)] $M$ is a rationally complete module;
\item[(b)] $\{\overline{x}\in E(M)/M\,|\, \mathbf{l}_{\text{E}(\text {M})}(\mathbf{r}_\text{R}(\overline{x}))=0\}=\overline{0}$;
\item[(c)] For any $I\leq^\emph{den}_MR$, $\varphi\in \emph{Hom}_R(I, M)$ can be uniquely extended to $\widetilde{\varphi}\in \emph{Hom}_R(R, M)$.
\end{enumerate}
 \end{thm}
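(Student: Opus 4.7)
The plan is to prove (a)$\Leftrightarrow$(b) by direct translation using Corollary \ref{char2}(i), and then establish (a)$\Leftrightarrow$(c) via injective-extension and density arguments built on Theorem \ref{char}.

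For (a)$\Leftrightarrow$(b), the key observation is that for any $x\in E(M)$ one has $\mathbf{r}_R(\overline{x})=\{r\in R\,|\,xr\in M\}=x^{-1}M$. Hence the set in (b) is precisely $\{\overline{x}\in E(M)/M\,|\,\mathbf{l}_{E(M)}(x^{-1}M)=0\}=\widetilde{E}(M)/M$ by Corollary \ref{char2}(i), and this is $\{\overline{0}\}$ exactly when $\widetilde{E}(M)=M$, i.e.\ when $M$ is rationally complete.

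For (a)$\Rightarrow$(c), given $I\leq^{\text{den}}_M R$ and $\varphi\in\text{Hom}_R(I,M)$, I would first use injectivity of $E(M)$ to extend $\varphi$ to $\psi\colon R\to E(M)$ and set $x=\psi(1)$. By construction $I\subseteq x^{-1}M$, and since relative density is trivially inherited by any right ideal containing $I$ (directly from Definition \ref{redense}), $x^{-1}M\leq^{\text{den}}_M R$. Theorem \ref{char} then gives $x\in\widetilde{E}(M)$, and rational completeness forces $x\in M$, so $\psi$ lands in $M$ and serves as $\widetilde{\varphi}$. Uniqueness follows by a standard difference argument: if two extensions agree on $I$ and $y\in M$ is the image of $1$ under their difference, then $yI=0$, which contradicts $I\leq^{\text{den}}_M R$ (applied with $r=1$, so that $1^{-1}I=I$) unless $y=0$.

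For (c)$\Rightarrow$(a), I would take $x\in\widetilde{E}(M)$, set $I=x^{-1}M$ (which is $M$-dense in $R$ by Theorem \ref{char}), and define $\varphi\colon I\to M$ by $\varphi(r)=xr$. By (c) this extends to $\widetilde{\varphi}\colon R\to M$, and writing $m=\widetilde{\varphi}(1)\in M$ yields $(x-m)r=0$ for all $r\in I$. The decisive step is then to deduce $x=m$ from essentiality together with relative density: setting $y=x-m\in E(M)$ and assuming $y\neq 0$, choose $r\in R$ with $0\neq yr\in M$ by essentiality of $M$ in $E(M)$, then choose $s\in R$ with $rs\in I$ and $(yr)s\neq 0$ by $M$-density of $I$; associativity gives $(yr)s=y(rs)\in yI=0$, a contradiction. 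Thus $x=m\in M$ and $\widetilde{E}(M)=M$. The main (though still mild) obstacle is exactly this two-step density-and-essentiality contradiction in (c)$\Rightarrow$(a); all other steps are bookkeeping that interpret $x^{-1}M$, $\mathbf{r}_R(\overline{x})$, and invoke the characterizations of $\widetilde{E}(M)$ already recorded.
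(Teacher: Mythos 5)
Your proposal is correct and uses essentially the same ingredients as the paper's proof (the identification $\mathbf{r}_R(\overline{x})=x^{-1}M$, Theorem \ref{char} together with Corollary \ref{rmdr}/Corollary \ref{char2}(i), extension through $E(M)$ by injectivity, and the map $r\mapsto xr$ on $x^{-1}M$ for (c)$\Rightarrow$(a)); the only difference is organizational, since you prove (a)$\Leftrightarrow$(b) and (a)$\Leftrightarrow$(c) separately while the paper runs the cycle (a)$\Rightarrow$(b)$\Rightarrow$(c)$\Rightarrow$(a). Your explicit essentiality-plus-density argument showing $\widetilde{\varphi}(1)=x$ in (c)$\Rightarrow$(a) supplies a detail the paper leaves implicit, and is valid.
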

\begin{proof} Take $A:=\{\overline{x}\in E(M)/M\,|\, \mathbf{l}_{\emph{E}(\emph {M})}(\mathbf{r}_\text{R}(\overline{x}))=0\}$. (a)$\Rightarrow$(b)  Assume that $x\in E(M)\setminus M$ such that $\overline{x}\in A$. From Corollary \ref{rmdr}, $\mathbf{r}_\text{R}(\overline{x})\leq^\text{den}_M R$. Since $\mathbf{r}_\text{R}(\overline{x})=x^{-1}M$, $x^{-1}M\leq^\text{den}_M R$. Hence from Theorem \ref{char} $x\in \widetilde E(M)=M$ because $M$ is rationally complete, a contradiction. Therefore $A=\overline{0}$. (b)$\Rightarrow$(c) Assume to the contrary of the condition (c). For $I\leq^\text{den}_MR$, since $M \subseteq E(M)$, there exists $\varphi\in \text{Hom}_R(I, M)$ such that $\widetilde{\varphi}\in \text{Hom}_R(R, E(M))$, $\widetilde{\varphi}|_I=\varphi$, and $\widetilde{\varphi}(1)\notin M$. Since $\overline{0}\neq \widetilde{\varphi}(1)+M\in E(M)/M$ and $I\subseteq\mathbf{r}_\text{R}(\widetilde{\varphi}(1)+M)\leq^\text{den}_M R$, $\mathbf{l}_{\emph{E}(\emph {M})}(\mathbf{r}_\text{R}(\widetilde{\varphi}(1)+M))=0$ from Corollary \ref{rmdr}, a contradiction that $A=\overline{0}$. Therefore $\varphi\in \text{Hom}_R(I, M)$ is extended to $\widetilde{\varphi}\in \text{Hom}_R(R, M)$. For the uniqueness, the proof is similar to that of Proposition \ref{relationship3}. 
 (c)$\Rightarrow$(a) Assume that $M$ is not rationally complete. Then there exists $x\in \widetilde E(M)\setminus M$ such that $x^{-1}M\leq^\text{den}_M R$ from Theorem \ref{char}. 
Define $\varphi: x^{-1}M \rightarrow M$ given by $\varphi(r)=xr$. By hypothesis, $\widetilde{\varphi}
(1)=x1=x\in M$, a contradiction. Therefore $M$ is rationally complete.
\end{proof}

Next, as a ring is embedding into its maximal right ring of quotients, we provide the relationship between the endomorphism rings of a module and its rational hull.
\begin{prop}\label{relationship3} Let $M$ and $K$ be right $R$-modules. For any $N\leq^\emph{den}_KM$, $\varphi\in \emph{Hom}_R(N, K)$ is uniquely extended to $\widetilde{\varphi}\in \emph{Hom}_R(M, \widetilde{E}(K))$ and $\widetilde{\varphi}|_N=\varphi$. In addition, $\varphi N\leq^\emph{den}\widetilde{\varphi} M$.
\end{prop}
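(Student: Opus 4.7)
The plan is to construct $\widetilde\varphi$ in two stages: first use injectivity of $E(K)$ to extend $\varphi$ to some map $\widetilde\varphi\colon M\to E(K)$, then invoke Theorem \ref{char} applied to $K$ to show that in fact $\widetilde\varphi(M)\subseteq \widetilde E(K)$. Uniqueness and the density ``in addition'' statement will then follow from two further applications of the hypothesis $N\leq^{\text{den}}_K M$.

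For the first stage, composing $\varphi$ with the inclusion $K\hookrightarrow E(K)$ and using injectivity of $E(K)$ immediately yields some $\widetilde\varphi\colon M\to E(K)$ with $\widetilde\varphi|_N=\varphi$. For the containment in $\widetilde E(K)$, I would fix $m\in M$ and show $\widetilde\varphi(m)^{-1}K\leq^{\text{den}}_K R$, which by Theorem \ref{char} places $\widetilde\varphi(m)$ inside $\widetilde E(K)$. Given $r\in R$ and $0\neq k\in K$, applying $N\leq^{\text{den}}_K M$ to $mr\in M$ and $k\in K$ provides $s\in R$ with $mrs\in N$ and $ks\neq 0$; then $\widetilde\varphi(m)(rs)=\widetilde\varphi(mrs)=\varphi(mrs)\in K$, so $rs\in \widetilde\varphi(m)^{-1}K$ while $ks\neq 0$, as required. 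I expect this step to be the main obstacle, since a purely injective extension by itself gives no control on landing in the rational hull, and this is where the relative $K$-density (rather than mere denseness of $N$ in $M$) is genuinely used.

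For uniqueness, consider the difference $\delta=\widetilde\varphi_1-\widetilde\varphi_2$ of two such extensions; then $\delta$ vanishes on $N$ and takes values in $\widetilde E(K)\subseteq E(K)$. If $\delta(m)\neq 0$ for some $m\in M$, essentiality of $K$ in $E(K)$ yields $r\in R$ with $0\neq \delta(m)r=\delta(mr)\in K$, after which $N\leq^{\text{den}}_K M$ applied to $mr$ and $\delta(mr)$ produces $s\in R$ with $mrs\in N$ yet $\delta(mr)s\neq 0$, contradicting $\delta(mrs)=0$. Finally, for $\varphi N\leq^{\text{den}}\widetilde\varphi M$: given $a=\widetilde\varphi(m_1)$ and $0\neq b=\widetilde\varphi(m_2)$ in $\widetilde\varphi M$, I would use $K\leq^{\text{ess}}\widetilde E(K)$ to choose $t\in R$ with $0\neq bt=\widetilde\varphi(m_2 t)\in K$, then apply $N\leq^{\text{den}}_K M$ to $m_1 t\in M$ and $bt\in K$ to obtain $s\in R$ with $m_1 ts\in N$ and $b(ts)\neq 0$. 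Setting $r=ts$, one checks $ar=\widetilde\varphi(m_1 r)=\varphi(m_1 r)\in\varphi N$ and $br\neq 0$, which is exactly the density condition.
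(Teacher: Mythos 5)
Your proof is correct, and the uniqueness argument and the verification of $\varphi N\leq^{\text{den}}\widetilde{\varphi}M$ coincide with the paper's (same difference-map contradiction, same two-step choice of $t$ and then $s$). Where you genuinely diverge is the key containment $\widetilde{\varphi}M\subseteq\widetilde{E}(K)$. The paper argues structurally: $\widetilde{\varphi}$ induces a surjection $M/N\twoheadrightarrow(\widetilde{\varphi}M+\widetilde{E}(K))/\widetilde{E}(K)$, so the vanishing $\text{Hom}_R(M/N,E(K))=0$ from Proposition \ref{ndense} forces $\widetilde{E}(K)\leq^{\text{den}}\widetilde{\varphi}M+\widetilde{E}(K)$, and rational completeness of $\widetilde{E}(K)$ then collapses the extension. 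You instead verify elementwise that $\widetilde{\varphi}(m)^{-1}K\leq^{\text{den}}_K R$ for each $m\in M$ and invoke Theorem \ref{char}; the computation (apply $N\leq^{\text{den}}_K M$ to the pair $mr$, $k$ to produce $s$ with $mrs\in N$, hence $\widetilde{\varphi}(m)rs=\varphi(mrs)\in K$ while $ks\neq 0$) is correct and uses the relative $K$-density exactly where it is needed. Your route is more elementary and self-contained: it avoids both the Hom-vanishing characterization of relative density and the fact that $\widetilde{E}(K)$ is rationally complete, at the cost of being a pointwise verification rather than a one-line structural deduction; the paper's route generalizes more readily to statements about maps into arbitrary rationally complete modules (as in the corollary that follows it).
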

\begin{proof}(Existence) Let $\varphi\in\text{Hom}_R(N, K)$ be arbitrary.
Then there exists $\widetilde{\varphi}\in \text{Hom}_R(M, E(K))$ such that
$\widetilde{\varphi}|_N=\varphi$.
Since $\widetilde{\varphi}$ induces a surjection from $M/N$ to $(\widetilde{\varphi}M+\widetilde{E}(K))/\widetilde{E}(K)$ and $\text{Hom}_R(M/N, E(K))=0$ (see Proposition \ref{ndense}), $\text{Hom}_R\left(\frac{\widetilde{\varphi}M+\widetilde{E}(K)}{\widetilde{E}(K)}, E(K)\right)=0$.
Hence $\widetilde{E}(K)\leq^\text{den}\widetilde{\varphi}M+\widetilde{E}(K)$ by Proposition \ref{pro138}.
As $\widetilde{E}(K)$ is rationally complete, $\widetilde{\varphi}M\subseteq \widetilde{E}(K)$.\\
\noindent (Uniqueness) Suppose $\widetilde{\varphi}$ and $\widetilde{\psi}$ are in $\text{Hom}_R(M, \widetilde{E}(K))$ such that $\widetilde{\varphi}|_N=\widetilde{\psi}|_N$.
It is enough to show that $\widetilde{\varphi}=\widetilde{\psi}$.
Assume that $\widetilde{\varphi}(x)\neq\widetilde{\psi}(x)$ for some $x\in M$.
Take $0\neq y=(\widetilde{\varphi}-\widetilde{\psi})(x)\in \widetilde{E}(K)$.
Thus, there exists $r\in R$ such that $0\neq yr\in K$.
Since $N\leq^\text{den}_KM$, there exists $s\in R$ such that $xrs\in N$ and $yrs\neq0$.
This yields a contradiction that $0\neq yrs=(\widetilde{\varphi}-\widetilde{\psi})(xrs)=(\widetilde{\varphi}|_N-\widetilde{\psi}|_N)(xrs)=0$.
Therefore $\widetilde{\varphi}=\widetilde{\psi}$.\\
In addition, let $x_1\in \widetilde{\varphi}M$ and $0\neq x_2\in \widetilde{\varphi}M$. Then $\widetilde{\varphi}(m_1)=x_1, \widetilde{\varphi}(m_2)=x_2$ for some $m_1, m_2\in M$. 
As $\widetilde{\varphi}M\subseteq \widetilde{E}(K)$, $0\neq x_2r\in K$ for some $r\in R$.
Since $N\leq^\text{den}_KM$ and $m_1r\in M$, there exists $s\in R$ such that $m_1rs\in N$ and $0\neq x_2rs$.
Thus $x_1rs=\widetilde{\varphi}(m_1rs)\in\varphi N$ and $0\neq x_2rs$.
Therefore $\varphi N\leq^\text{den}\widetilde{\varphi}M$.
\end{proof}
Note that the dense property implies the essential property, however  the relatively dense property does not imply the essential property in general: See $\mathbb{Z}_p\leq^\text{den}_\mathbb{Z} \mathbb{Z}_p\oplus\mathbb{Z}_p$ but $\mathbb{Z}_p\nleq^\text{ess} \mathbb{Z}_p\oplus\mathbb{Z}_p$ as a $\mathbb{Z}$-module. However, Proposition \ref{relationship3} shows that $\varphi N\leq^\text{den}\widetilde{\varphi} M$ when $N\leq^\text{den}_KM$ for any $\varphi\in \text{Hom}_R(N, K)$.
As a corollary, we have a generalized result of Theorem \ref{rationally complete}((a)$\Rightarrow$(b)).
\begin{cor} Let $M$ be a right $R$-module. If $K$ is rationally complete, then for any $N\leq^\emph{den}_KM$, $\varphi\in \emph{Hom}_R(N, K)$ is uniquely extended to $\widetilde{\varphi}\in \emph{Hom}_R(M, K)$ and $\widetilde{\varphi}|_N=\varphi$.
\end{cor}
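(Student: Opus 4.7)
The plan is to derive this as an immediate consequence of Proposition \ref{relationship3} by invoking the definition of rational completeness. Given $N \leq^{\text{den}}_K M$ and $\varphi \in \text{Hom}_R(N,K)$, Proposition \ref{relationship3} already supplies a unique extension $\widetilde{\varphi} \in \text{Hom}_R(M,\widetilde{E}(K))$ with $\widetilde{\varphi}|_N = \varphi$. So the only thing I need to do is to argue that this extension in fact lands in $K$ itself.

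For that, I would use the hypothesis that $K$ is rationally complete, which by the definition recalled just before Theorem \ref{racom} means $\widetilde{E}(K) = K$. Thus $\widetilde{\varphi}$ is already a map $M \to K$, and the existence half is done.

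For uniqueness, suppose $\widetilde{\varphi}, \widetilde{\psi} \in \text{Hom}_R(M, K)$ both restrict to $\varphi$ on $N$. Viewing them as maps into $K = \widetilde{E}(K)$, uniqueness in Proposition \ref{relationship3} forces $\widetilde{\varphi} = \widetilde{\psi}$. So there is essentially no new work beyond citing the previous proposition and the rational completeness of $K$; the statement is genuinely a corollary, and I would expect the proof to be only a couple of lines. There is no real obstacle, since all of the analytic content (the density manipulation via $\text{Hom}_R(M/N, E(K)) = 0$ and the use of rational completeness to push $\widetilde{\varphi}M$ inside $\widetilde{E}(K)$) has already been done inside Proposition \ref{relationship3}.
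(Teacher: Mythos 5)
Your proposal is correct and is exactly the intended derivation: the paper states this as an immediate corollary of Proposition \ref{relationship3} (offering no separate proof), and substituting $\widetilde{E}(K)=K$ from the definition of rational completeness is all that is needed for both existence and uniqueness.
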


\begin{thm}\label{relationship} Let $M$ be a right $R$-module.
Then $\emph{End}_R(M)$ is considered as a subring of $\emph{End}_R(\widetilde{E}(M))$.
\end{thm}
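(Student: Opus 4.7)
The plan is to use Proposition \ref{relationship3} as an extension theorem to produce, for each $\varphi\in\text{End}_R(M)$, a canonical endomorphism $\widetilde{\varphi}$ of $\widetilde{E}(M)$, and then to verify that the assignment $\varphi\mapsto\widetilde{\varphi}$ is an injective ring homomorphism.

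First I would set up the extension. Since $M\leq^{\text{den}}\widetilde{E}(M)$ and by Definition \ref{redense} this is the same as $M\leq^{\text{den}}_M\widetilde{E}(M)$, I apply Proposition \ref{relationship3} with the roles played by $N=M$, ambient module $\widetilde{E}(M)$, and $K=M$. For any $\varphi\in\text{End}_R(M)=\text{Hom}_R(M,M)$, this yields a unique $\widetilde{\varphi}\in\text{Hom}_R(\widetilde{E}(M),\widetilde{E}(M))=\text{End}_R(\widetilde{E}(M))$ with $\widetilde{\varphi}|_M=\varphi$ (note $\widetilde{E}(K)=\widetilde{E}(M)$ here). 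Define $\Phi:\text{End}_R(M)\to\text{End}_R(\widetilde{E}(M))$ by $\Phi(\varphi)=\widetilde{\varphi}$.

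Next I would check the ring-homomorphism axioms, using the uniqueness clause of Proposition \ref{relationship3} as the main tool: two extensions of the same map from $M$ to $\widetilde{E}(M)$ must coincide. For addition, $\widetilde{\varphi}+\widetilde{\psi}$ restricts to $\varphi+\psi$ on $M$, so by uniqueness $\widetilde{\varphi+\psi}=\widetilde{\varphi}+\widetilde{\psi}$. For composition, note that $\widetilde{\psi}(M)=\psi(M)\subseteq M$, so $\widetilde{\varphi}\circ\widetilde{\psi}$ restricted to $M$ equals $\varphi\circ\psi$; again by uniqueness $\widetilde{\varphi\psi}=\widetilde{\varphi}\circ\widetilde{\psi}$. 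The identity $1_{\widetilde{E}(M)}$ clearly extends $1_M$, so $\Phi(1_M)=1_{\widetilde{E}(M)}$.

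Finally, injectivity is immediate: if $\widetilde{\varphi}=0$ in $\text{End}_R(\widetilde{E}(M))$, then $\varphi=\widetilde{\varphi}|_M=0$. Hence $\Phi$ realizes $\text{End}_R(M)$ as a subring of $\text{End}_R(\widetilde{E}(M))$. There is no real obstacle here beyond unpacking the correct specialization of Proposition \ref{relationship3}; the only point to be careful about is the direction of the extension (from the dense submodule to the ambient module), and the observation that the image of the extension lies in $\widetilde{E}(M)$ itself rather than merely in $E(M)$, which is exactly what Proposition \ref{relationship3} provides.
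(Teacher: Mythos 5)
Your proposal is correct and follows essentially the same route as the paper: apply Proposition \ref{relationship3} to the dense submodule $M\leq^{\text{den}}\widetilde{E}(M)$ to get a unique extension $\widetilde{\varphi}\in\text{End}_R(\widetilde{E}(M))$ of each $\varphi\in\text{End}_R(M)$, then use the uniqueness clause to verify additivity and multiplicativity of $\varphi\mapsto\widetilde{\varphi}$, with injectivity immediate from restriction. The only cosmetic difference is the specialization of Proposition \ref{relationship3} (you take $K=M$, the paper views $\varphi$ as landing in $\widetilde{E}(M)$); both are valid and lead to the same argument.
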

\begin{proof} Since $M\leq^\text{den} \widetilde{E}(M)$, from Proposition \ref{relationship3} $\varphi\in\text{End}_R(M)$ can be uniquely extended to  $\widetilde{\varphi}\in\text{End}_R(\widetilde{E}(M))$ because $\text{End}_R(M)\subseteq \text{Hom}_R(M, \widetilde{E}(M))$.
Thus we have a one-to-one correspondence between $\text{End}_R(M)$ and $\{\widetilde{\varphi}\in\text{End}_R(\widetilde{E}(M))\,|\,\widetilde{\varphi}|_M={\varphi}\in\text{End}_R(M)\}$ given by  $\Omega(\varphi)=\widetilde{\varphi}$.
We need to check that $\Omega$ is a ring homomorphism.

\noindent (i) Since $\Omega(\varphi+\psi)|_M=(\widetilde{\varphi+\psi})|_M=\varphi+\psi=\Omega(\varphi)|_M+\Omega(\psi)|_M=(\Omega(\varphi)+\Omega(\psi))|_M$,  from the uniqueness of Proposition \ref{relationship3} we have $\Omega(\varphi+\psi)= \Omega(\varphi)+\Omega(\psi)$.

\noindent (ii) Since $\Omega{(\varphi\circ\psi)}|_M=(\widetilde{\varphi\circ\psi})|_M=\varphi\circ\psi=\Omega(\varphi)|_M\circ\Omega(\psi)|_M=(\Omega(\varphi)\circ\Omega(\psi))|_M$ because $\Omega(\varphi)|_M\leq M$, from the uniqueness of Proposition \ref{relationship3} we have $\Omega(\varphi\circ\psi)= \Omega(\varphi)\circ\Omega(\psi)$.

\noindent Thus $\text{End}_R(M)$ is isomorphic to a subring of $\text{End}_R(\widetilde{E}(M))$.
Therefore we consider $\text{End}_R(M)$ as a subring of $\text{End}_R(\widetilde{E}(M))$.
\end{proof}

We conclude this section with results for the rational hulls of quasi-continuous modules and quasi-injective modules.
\begin{thm}\label{qiemqi} The following statements hold true for a module $M$:
\begin{enumerate}
\item[(i)] If $M$ is a quasi-continuous module then $\widetilde{E}(M)$ is a quasi-continuous module.
\item[(ii)] If $M$ is a quasi-injective module then $\widetilde{E}(M)$ is a quasi-injective module.
\end{enumerate}
 \end{thm}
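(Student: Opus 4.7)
The plan is to exploit two simple facts. First, $\widetilde{E}(M)$ is sandwiched between $M$ and $E(M)$ as an essential extension of $M$, so $E(\widetilde{E}(M))=E(M)$, and consequently the endomorphism ring $T=\text{End}_R(E(M))$ is \emph{also} the endomorphism ring of the injective hull of $\widetilde{E}(M)$. Second, I will use the classical characterizations: $N$ is quasi-injective iff $\text{End}_R(E(N))\cdot N\subseteq N$, and $N$ is quasi-continuous iff $e(N)\subseteq N$ for every idempotent $e\in \text{End}_R(E(N))$. With these in hand, proving each part reduces to showing that $\widetilde{E}(M)$ inherits the corresponding invariance inside $E(M)$, and the description $\widetilde{E}(M)=\mathbf{r}_{E(M)}(\mathbf{l}_T(M))$ from the introduction is tailor-made for this.

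For (ii), let $\alpha\in T$ be arbitrary and $x\in\widetilde{E}(M)$; I want $\alpha(x)\in\widetilde{E}(M)$. Take any $\vartheta\in T$ with $\vartheta(M)=0$. Since $M$ is quasi-injective, $\alpha(M)\subseteq M$, so $(\vartheta\circ\alpha)(M)=\vartheta(\alpha(M))\subseteq\vartheta(M)=0$. Thus $\vartheta\circ\alpha\in\mathbf{l}_T(M)$, and the fact that $x\in\mathbf{r}_{E(M)}(\mathbf{l}_T(M))$ forces $\vartheta(\alpha(x))=(\vartheta\circ\alpha)(x)=0$. Therefore $\alpha(x)\in\widetilde{E}(M)$, giving $T\cdot\widetilde{E}(M)\subseteq\widetilde{E}(M)$; combined with $E(\widetilde{E}(M))=E(M)$, this is exactly quasi-injectivity of $\widetilde{E}(M)$.

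For (i), the same computation carries over verbatim with $\alpha$ replaced by an arbitrary idempotent $e=e^2\in T$: quasi-continuity of $M$ gives $e(M)\subseteq M$, so $\vartheta\circ e$ again lies in $\mathbf{l}_T(M)$ whenever $\vartheta$ does, forcing $e(x)\in\widetilde{E}(M)$ for every $x\in\widetilde{E}(M)$. Hence every idempotent of $T=\text{End}_R(E(\widetilde{E}(M)))$ preserves $\widetilde{E}(M)$, which is the invariance characterization of quasi-continuity. There is no genuine obstacle in this argument; the only point to be careful about is invoking the correct invariance-under-$T$ characterizations of quasi-injectivity and quasi-continuity, but these are standard. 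Notice in particular that idempotency of $e$ is used only to quote quasi-continuity of $M$; the stability transfer itself requires merely $e(M)\subseteq M$.
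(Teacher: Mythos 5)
Your proof is correct, and it reaches the conclusion by a cleaner route than the paper's. Both arguments rest on the same two ingredients: the invariance characterizations ($N$ is quasi-injective iff $\mathrm{End}_R(E(N))\cdot N\subseteq N$, quasi-continuous iff every idempotent of $\mathrm{End}_R(E(N))$ preserves $N$) and the description of $\widetilde{E}(M)$ via endomorphisms of $E(M)$ vanishing on $M$. The difference is in how the key step is executed. The paper argues by contradiction: from $f(x)\notin\widetilde{E}(M)$ it extracts $g\in T$ with $gM=0$ and $gf(x)\neq 0$, then uses essentiality of $M$ in $E(M)$ together with density of $M$ in $\widetilde{E}(M)$ to manufacture an element $xrs\in M$ with $gf(xrs)\neq 0$, contradicting $gfM\subseteq gM=0$. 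You instead observe directly that $\vartheta\circ\alpha\in\mathbf{l}_T(M)$ whenever $\vartheta\in\mathbf{l}_T(M)$ and $\alpha(M)\subseteq M$, so $x\in\mathbf{r}_{E(M)}(\mathbf{l}_T(M))$ forces $\vartheta(\alpha(x))=0$; this uses the full strength of $\widetilde{E}(M)=\mathbf{r}_{E(M)}(\mathbf{l}_T(M))$ and eliminates the element-chasing with $r,s\in R$ entirely. Your closing remark that idempotency of $e$ is used only to invoke quasi-continuity of $M$ is also accurate and makes the uniform treatment of (i) and (ii) transparent. The only points to keep explicit in a final write-up are $E(\widetilde{E}(M))=E(M)$ (which you do note) and the precise references for the two invariance characterizations, which the paper takes from \cite[Theorem 2.8]{m} and \cite[Corollary 1.14]{m}.
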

\begin{proof} (i) Let $T=\text{End}_R(E(\widetilde{E}(M)))=\text{End}_R(E(M))$. From \cite[Theorem 2.8]{m}, we need to show that $f\widetilde{E}(M)\leq \widetilde{E}(M)$ for all  idempotents $f^2=f\in T$: Assume that $f\widetilde{E}(M)\nleq \widetilde{E}(M)$ for some idempotent $f^2=f\in T$. Then there exists  $x\in \widetilde{E}(M)$ such that $f(x)\notin\widetilde{E}(M)$. Thus, there exists $g\in T$ such that $g M=0$ and $g f(x)\neq 0$. Since $g f(x)\in E(M)$, there exists $r\in R$ such that $0\neq gf(xr)\in \widetilde{E}(M)$.
Thus, as $M\leq^\text{den}\widetilde{E}(M)$ and $xr\in \widetilde{E}(M)$, there exists $s\in R$ such that $0\neq g f(xrs)$ and $xrs\in M$. Note that $fM\leq M$ for all idempotents $f^2=f\in T$ because $M$ is quasi-continuous. However, $0\neq gf(xrs)\in g fM\leq gM=0$, a contradiction. Therefore $\widetilde{E}(M)$ is a quasi-continuous module.

(ii) The proof is similar to that of part (i) by using \cite[Corollary 1.14]{m}.
\end{proof}

\begin{rem}[{\cite[Theorem 5.3]{abt}}]\label{qiemqi2} The rational hull of every extending module is an extending module.
\end{rem}

Note that if $M$ is an injective module then $M=\widetilde{E}(M)$ (see \cite[Examples 8.18(1)]{l}).
The next examples exhibit that the converses of Theorem \ref{qiemqi} and Remark \ref{qiemqi2} do not hold true.

\begin{exam}
(i) Consider $\mathbb{Z}$ as a $\mathbb{Z}$-module.
Then $\widetilde{E}(\mathbb{Z})=\mathbb{Q}$ is (quasi-)injective, while $\mathbb{Z}$ is not quasi-injective.

(ii)(\cite[Example 2.9]{m}) Consider a ring $R=\left(\begin{smallmatrix} F& F  \\ 0 &  F\end{smallmatrix}\right)$ where $F$ is a field.
Then $\widetilde{E}(R_R)=\left(\begin{smallmatrix} F& F  \\ F &  F\end{smallmatrix}\right)$ is injective (hence, quasi-continuous), while $R_R$ is not quasi-continuous.

(iii) Consider $\mathbb{Z}^{(\mathbb{N})}$ as a $\mathbb{Z}$-module.
Then $\widetilde{E}(\mathbb{Z}^{(\mathbb{N})})=\mathbb{Q}^{(\mathbb{N})}$ is injective (hence, extending), while $\mathbb{Z}^{(\mathbb{N})}$ is not extending.
\end{exam}

\begin{cor} The maximal right ring of quotients of a quasi-continuous ring is also a quasi-continuous ring.
\end{cor}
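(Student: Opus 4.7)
The plan is to apply Theorem \ref{qiemqi}(i) with $M=R_R$ and then transfer the resulting quasi-continuity from the right $R$-module structure to the right $Q(R)$-module structure on $Q(R)$.

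Since the hypothesis that $R$ is a quasi-continuous ring means that $R_R$ is a quasi-continuous right $R$-module, and since $\widetilde{E}(R_R)=Q(R)$ by the identification recalled in the Introduction, Theorem \ref{qiemqi}(i) immediately yields that $Q(R)$ is quasi-continuous as a right $R$-module. The nontrivial task is then to promote this to the statement that $Q(R)_{Q(R)}$ is quasi-continuous.

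Next I would establish that the canonical map $Q(R)\to\text{End}_R(Q(R)_R)$ sending $q$ to left multiplication by $q$ is a ring isomorphism: for any $\varphi\in\text{End}_R(Q(R)_R)$ one has $\varphi(r)=\varphi(1)r$ for every $r\in R$, so $\varphi$ agrees with left multiplication by $q:=\varphi(1)$ on the dense submodule $R\leq^{\text{den}}Q(R)$, and the uniqueness clause of Proposition \ref{relationship3} forces $\varphi$ to equal this left multiplication on all of $Q(R)$. Consequently the $R$-direct summands of $Q(R)_R$ are exactly the sets $eQ(R)$ for ring idempotents $e\in Q(R)$, which coincide with the $Q(R)$-direct summands of $Q(R)_{Q(R)}$.

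To conclude, I would verify conditions (C1) and (C3) for $Q(R)_{Q(R)}$. For (C1), given a right ideal $I$ of $Q(R)$, viewing it as an $R$-submodule of the quasi-continuous $R$-module $Q(R)_R$ produces an $R$-direct summand $D=eQ(R)$ with $I\leq^{\text{ess}}_R D$; by the previous step, $D$ is already a $Q(R)$-direct summand, and since every nonzero right $Q(R)$-ideal of $D$ is a nonzero $R$-submodule of $D$ (hence meets $I$ nontrivially), one obtains $I\leq^{\text{ess}}_{Q(R)} D$. For (C3), two $Q(R)$-direct summands with zero intersection are in particular $R$-direct summands of $Q(R)_R$, whose sum is an $R$-direct summand by (C3) for $Q(R)_R$, and therefore a $Q(R)$-direct summand by the identification above. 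The main obstacle is precisely this transfer between the $R$- and $Q(R)$-module structures, and it rests on the identification $\text{End}_R(Q(R)_R)\cong Q(R)$ furnished by the uniqueness statement of Proposition \ref{relationship3} applied to the dense inclusion $R\leq^{\text{den}}Q(R)$.
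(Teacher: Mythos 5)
Your proof is correct, and it follows the route the paper intends: the corollary is stated without proof immediately after Theorem \ref{qiemqi}, as an application of part (i) to $M=R_R$ together with the identification $\widetilde{E}(R_R)=Q(R)$. What you add beyond the paper is the explicit transfer from quasi-continuity of $Q(R)$ as a right $R$-module to quasi-continuity of $Q(R)$ as a right module over itself; the paper leaves this step implicit. Your handling of it is sound: the identification $\text{End}_R(Q(R)_R)\cong Q(R)$ via left multiplications (justified by the uniqueness clause of Proposition \ref{relationship3} applied to the dense inclusion $R\leq^{\text{den}}Q(R)$ and the fact that $Q(R)=\widetilde{E}(Q(R)_R)$) shows that the $R$-direct summands of $Q(R)_R$ and the $Q(R)$-direct summands of $Q(R)_{Q(R)}$ are the same sets $eQ(R)$, $e^2=e\in Q(R)$, and since every $Q(R)$-submodule is an $R$-submodule, conditions (C1) and (C3) pass from one structure to the other exactly as you describe. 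This is a worthwhile elaboration rather than a deviation from the paper's argument.
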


\begin{rem}[{\cite[Exercises 13.8]{l}}]\label{qiemqi3} The maximal right ring of quotients of a simple (resp., prime, semiprime) ring is also a simple (resp., prime, semiprime) ring. 
\end{rem}

\begin{quest} Is the rational hull of a continuous module always a continuous module?
\end{quest}

\section{Direct sum of rational hulls of modules}

As we know, the injective hull of the direct sum of two modules is the direct sum of the injective hulls of each module without any condition. However, the rational hull case is different from the injective hull case. In this section, we discuss the condition for the rational hull of the direct sum of two modules to be the direct sum of the rational hulls of those modules.
The next example shows that the rational hull of the direct sum of two modules is not the direct sum of the rational hulls of each module, in general. 

\begin{exam}\label{notds} Consider $M=\mathbb{Z}\oplus\mathbb{Z}_p$ as a $\mathbb{Z}$-module where $p$ is prime.
Then $\widetilde{E}(\mathbb{Z})=\mathbb{Q}$ and $\widetilde{E}(\mathbb{Z}_p)=\mathbb{Z}_p$.
However, by \cite[Example 8.21]{l} $\widetilde{E}(M)=\mathbb{Z}_{(p)}\oplus\mathbb{Z}_p\neq \mathbb{Q}\oplus\mathbb{Z}_p$
where $\mathbb{Z}_{(p)}=\{\frac{m}{n}\in\mathbb{Q}\,|\,{m, n}\in\mathbb{Z}, (n, p)=1\}$.
Hence $M$ is not a dense submodule of $\mathbb{Q}\oplus\mathbb{Z}_p$: For $(\frac{1}{p}, \overline{0})$ and $0\neq(0, \overline{1})\in\mathbb{Q}\oplus\mathbb{Z}_p$, there is no $n\in \mathbb{Z}$ such that $n(\frac{1}{p}, \overline{0})\in\mathbb{Z}\oplus\mathbb{Z}_p$ and $n(0, \overline{1})\neq0$.
\end{exam}

\begin{prop}\label{noefinite} Let $M=\bigoplus_{k\in \Lambda} M_k$ where $M_k$ be a right $R$-module and $\Lambda$ is any index set.
If either $R$ is right noetherian or $|\Lambda|$ is finite, then $\widetilde{E}(M)\leq\bigoplus_{k\in \Lambda}\widetilde{E}(M_k)$.
\end{prop}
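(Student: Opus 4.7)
The plan is to reduce the claim to a componentwise verification using Theorem \ref{char}, after first securing the decomposition of the injective hull. Under either hypothesis one has $E(M)=\bigoplus_{k\in\Lambda}E(M_k)$: this is automatic when $|\Lambda|$ is finite, and follows from the Matlis--Papp theorem (direct sums of injectives remain injective over a right noetherian ring) in the other case. Since $\widetilde{E}(M)\leq E(M)$, every $x\in\widetilde{E}(M)$ can be written as $x=(x_k)_{k\in\Lambda}$ with $x_k\in E(M_k)$ and with only finitely many $x_k$ nonzero, so it is enough to show that $x_k\in\widetilde{E}(M_k)$ for each $k$.

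For this, I would apply Theorem \ref{char} on both sides: from $x\in\widetilde{E}(M)$ I get $x^{-1}M\leq^{\text{den}}_{M}R$, and the goal $x_k\in\widetilde{E}(M_k)$ is equivalent to $x_k^{-1}M_k\leq^{\text{den}}_{M_k}R$. The bridge between the two is the elementary inclusion
\[
x^{-1}M\;\subseteq\;x_k^{-1}M_k,
\]
which holds because if $xr\in M=\bigoplus_j M_j$, then its $k$-th component $x_k r$ must lie in $M_k$. This inclusion is where the decomposition of $E(M)$ is actually used.

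Given this, the density verification is routine: fix $r\in R$ and $0\neq m\in M_k$. Viewing $m$ as a nonzero element of $M$ via the canonical embedding $M_k\hookrightarrow M$ and invoking $x^{-1}M\leq^{\text{den}}_{M}R$, one obtains $s\in R$ with $ms\neq 0$ and $rs\in x^{-1}M$; the latter forces $rs\in x_k^{-1}M_k$ by the above inclusion. Hence $x_k^{-1}M_k\leq^{\text{den}}_{M_k}R$, so by Theorem \ref{char} we conclude $x_k\in\widetilde{E}(M_k)$, which yields $\widetilde{E}(M)\leq\bigoplus_{k\in\Lambda}\widetilde{E}(M_k)$.

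The only genuinely nontrivial step is the decomposition $E(M)=\bigoplus_k E(M_k)$, and this is exactly where the hypothesis (either noetherianness or finiteness of $\Lambda$) is indispensable; without it the containment $\bigoplus_k E(M_k)\subseteq E(M)$ can be proper, and the componentwise argument above loses its meaning. Once the decomposition is in hand, the remainder is a short unpacking of Theorem \ref{char} together with the observation about the canonical projections.
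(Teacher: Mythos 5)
Your proposal is correct and follows essentially the same route as the paper: decompose $E(M)=\bigoplus_{k}E(M_k)$ under either hypothesis, write $x$ with finitely many nonzero components, use the inclusion $x^{-1}M\subseteq x_k^{-1}M_k$ (the paper states it as $m^{-1}M=\bigcap_i m_i^{-1}M_i$), and conclude componentwise. The only cosmetic difference is the characterization invoked at the last step --- the paper tests $y\cdot x_k^{-1}M_k\neq 0$ against all nonzero $y\in E(M_k)$ via Proposition \ref{rationalhull}(ii), whereas you verify $x_k^{-1}M_k\leq^{\text{den}}_{M_k}R$ via Theorem \ref{char}; these are interchangeable here.
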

\begin{proof} Suppose $0\neq m\in\widetilde{E}(M)$.
Since $\widetilde{E}(M)\subseteq E(M)=\oplus_{k\in \Lambda}E(M_k)$ because $R$ is right noetherian or $|\Lambda|$ is finite,
there exists $\ell\in\mathbb{N}$ such that $m\in\oplus_{i=1}^\ell E(M_i)$.
Thus, $m=(m_1,\dots, m_\ell)$ where $m_i\in E(M_i)$.
Since $(0, \dots, 0, y_i, 0, \dots, 0)\cdot m^{-1}M\neq 0$ for all $0\neq y_i\in E(M_i)$  and $m^{-1}M=m_1^{-1}M_1\cap \cdots \cap m_\ell^{-1}M_\ell$, $y_i\cdot m_i^{-1}M_i\neq 0$  for all $0\neq y_i\in E(M_i)$.
Thus, $m_i\in \widetilde{E}(M_i)$ for all $1\leq i\leq \ell$ from Proposition \ref{rationalhull}.
So, $m=(m_1,\dots, m_\ell)\in\oplus_{i=1}^\ell\widetilde{E}(M_i)\subseteq \oplus_{k\in \Lambda}\widetilde{E}(M_k)$.
Therefore $\widetilde{E}(M)\leq \oplus_{k\in \Lambda}\widetilde{E}(M_k)$.
\end{proof}

\begin{rem} Example \ref{notds} illustrates Proposition \ref{noefinite} because $R=\mathbb{Z}$ is a noetherian ring, that is, $\widetilde{E}(\mathbb{Z}\oplus\mathbb{Z}_p)=\mathbb{Z}_{(p)}\oplus\mathbb{Z}_p\lneq \mathbb{Q}\oplus\mathbb{Z}_p=\widetilde{E}(\mathbb{Z})\oplus\widetilde{E}(\mathbb{Z}_p)$. However, Example \ref{sumexam} shows that  the condition ``either $R$ is right noetherian or $|\Lambda|$ is finite" is not superfluous because
$\widetilde{E}(\oplus_{k\in\Lambda}\mathbb{Z}_2)=\prod_{k\in\Lambda}\mathbb{Z}_2\gneq \oplus_{k\in\Lambda}\mathbb{Z}_2=\oplus_{k\in\Lambda}\widetilde{E}(\mathbb{Z}_2)$ with a non-noetherian ring $R=\langle \oplus_{k\in\Lambda}\mathbb{Z}_2, 1\rangle$.
\end{rem}

To get the reverse inclusion of Proposition \ref{noefinite}, first we provide the properties of the relatively dense property. 
\begin{lem}\label{ndense2} Let $N\leq M$ and $K_i$ be right $R$-modules for all $i\in \Lambda$. Then $N$ is $K_i$-dense in $M$ for all $i\in \Lambda$ if and only if $N$ is $\bigoplus_{i\in \Lambda}K_i$-dense in $M$ if and only if $N$ is $\bigoplus_{i\in \Lambda}\widetilde{E}(K_i)$-dense in $M$.
\end{lem}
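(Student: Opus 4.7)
The plan is to prove the three-way equivalence by separating it into two independent equivalences:
(I) $N$ is $K_i$-dense in $M$ for all $i\in\Lambda$ iff $N$ is $\bigoplus_{i\in\Lambda}K_i$-dense in $M$, and
(II) for any module $L$, $N$ is $L$-dense in $M$ iff $N$ is $\widetilde{E}(L)$-dense in $M$.
Applying (II) to each $L=K_i$ and then (I) to the family $\{\widetilde{E}(K_i)\}$ yields the full statement.

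For (I), I would argue directly from Definition \ref{redense}. In the forward direction, given $m\in M$ and $0\neq x=(x_i)_{i\in\Lambda}\in\bigoplus K_i$, pick any index $i_0$ with $x_{i_0}\neq 0$; the hypothesis that $N$ is $K_{i_0}$-dense in $M$ supplies an $r\in R$ with $mr\in N$ and $x_{i_0}r\neq 0$, and then the $i_0$-th component of $xr$ is nonzero, so $xr\neq 0$. In the reverse direction, fix $j\in\Lambda$ and let $m\in M$, $0\neq y\in K_j$; embed $y$ into the direct sum as the element $x$ with $y$ in the $j$-th coordinate and $0$ elsewhere. Since $0\neq x\in\bigoplus K_i$, the $\bigoplus K_i$-density produces $r\in R$ with $mr\in N$ and $xr\neq 0$, forcing $yr\neq 0$.

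For (II), I would invoke the characterization in Proposition \ref{ndense}(b): $N$ is $L$-dense in $M$ iff $\text{Hom}_R(M/N,E(L))=0$. Since $K_i\leq^{\text{den}}\widetilde{E}(K_i)\leq E(K_i)$, density implies that $K_i$ is essential in $\widetilde{E}(K_i)$, which together with $K_i\leq^{\text{ess}}E(K_i)$ gives $E(\widetilde{E}(K_i))=E(K_i)$. Thus the two $\text{Hom}$-vanishing conditions coincide, proving (II).

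I do not expect any serious obstacle here; the content of the lemma is entirely captured by the componentwise behaviour of density together with the identification $E(\widetilde{E}(K_i))=E(K_i)$. The only mild subtlety is that an arbitrary element of $\bigoplus K_i$ has finite support, so one must pick a specific nonzero coordinate rather than try to handle all coordinates simultaneously; this is already built into the argument above.
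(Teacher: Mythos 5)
Your proof is correct, but it takes a different route from the paper for the first equivalence. The paper works entirely through the homological characterization of relative density (Proposition \ref{ndense}): it fixes $P$ with $N\leq P\leq M$, uses the embedding $\bigoplus_{i\in\Lambda}K_i\hookrightarrow\prod_{i\in\Lambda}K_i$ and the left exactness of $\mathrm{Hom}_R(P/N,-)$ together with $\mathrm{Hom}_R(P/N,\prod_i K_i)\cong\prod_i\mathrm{Hom}_R(P/N,K_i)$ to conclude $\mathrm{Hom}_R(P/N,\bigoplus_i K_i)=0$, and reads off the converse by projecting onto each summand. You instead verify the definition element by element, exploiting the fact that relative density quantifies over a \emph{single} nonzero target element $x$, so one only needs to protect one nonzero coordinate $x_{i_0}$ of an element of the direct sum --- no iteration over the finite support and no Hom machinery is needed. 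Your argument is more elementary and makes transparent why the statement holds for arbitrary index sets; the paper's argument is shorter on the page and sits naturally alongside the functorial characterizations it has already established (and, as a byproduct, its injection into the product shows the same vanishing for $\prod_i K_i$). Your treatment of the second equivalence, via $E(\widetilde{E}(K_i))=E(K_i)$ and Proposition \ref{ndense}(b), is exactly the paper's.
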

\begin{proof} Let $P$ be any submodule such that $N\leq P\leq M$.
Since $N$ is $K_i$-dense in $M$, $\text{Hom}_R(P/N, K_i)=0$ for all $i\in \Lambda$ from Proposition \ref{ndense}. Consider the sequence $0\rightarrow \oplus_{i\in \Lambda}K_i\rightarrow\prod_{i\in \Lambda}K_i$. Then we have  $0\rightarrow \text{Hom}_R(P/N, \oplus_{i\in \Lambda}K_i)\rightarrow\text{Hom}_R(P/N,\prod_{i\in \Lambda}K_i)\cong \prod_{i\in \Lambda}(\text{Hom}_R(P/N, K_i)=0$.
Thus $\text{Hom}_R(P/N, \oplus_{i\in \Lambda}K_i)=0$. Therefore $N$ is $\oplus_{i\in \Lambda}K_i$-dense in $M$ from Proposition \ref{ndense}.
Conversely, since $\text{Hom}_R(P/N, \oplus_{i\in \Lambda}K_i)=0$, $\text{Hom}_R(P/N, K_i)=0$ for each $i\in \Lambda$.
Hence $N$ is $K_i$-dense in $M$ for all $i\in \Lambda$.\\ For the second equivalence, since $E(\widetilde{E}(K_i))=E(K_i)$, from Proposition \ref{ndense} it is easy to see that $N$ is $K_i$-dense in $M$ if and only if $N$ is $\widetilde{E}(K_i)$-dense in $M$, for all $i\in \Lambda$. After using the first equivalence, we have the second equivalence.
\end{proof}

Using Lemma \ref{ndense2}, we obtain a characterization for $\oplus_{k\in \Lambda} N_k$ to be a dense submodule of $\oplus_{k\in \Lambda} M_k$ where $N_i$ is a submodule of $M_i$ for each $i\in \Lambda$.
\begin{prop}\label{ndense5} Let $N_i\leq M_i$ be right $R$-modules for all $i\in \Lambda$ where $\Lambda$ is any index set.
Let $N=\bigoplus_{k\in \Lambda} N_k$ and $M=\bigoplus_{k\in \Lambda} M_k$. Then $N\leq^\emph{den}M$ if and only if $N_i$ is $M_j$-dense in $M_i$ for all $i, j\in \Lambda$.
\end{prop}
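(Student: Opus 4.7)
The plan is to dispatch the two directions using the Hom-characterization of density from Proposition \ref{ndense}(b), together with a direct-product embedding of $E(M)$.

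For $(\Rightarrow)$, fix $i, j \in \Lambda$ and take $m \in M_i$ and $0 \neq x \in M_j$, viewed inside $M$ via the canonical inclusions $M_i \hookrightarrow M$ and $M_j \hookrightarrow M$. Density of $N$ in $M$ supplies $r \in R$ with $mr \in N$ and $xr \neq 0$. Since $m \in M_i$ and $M_i$ is an $R$-submodule, $mr \in M_i$, whence $mr \in N \cap M_i = N_i$; likewise $xr \in M_j$ is nonzero. This establishes that $N_i$ is $M_j$-dense in $M_i$.

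For $(\Leftarrow)$, by Proposition \ref{pro138}(b) it suffices to show $\text{Hom}_R(M/N, E(M)) = 0$. The subtlety is that when $\Lambda$ is infinite and $R$ is not right noetherian, one generally has $E(M) \neq \bigoplus_k E(M_k)$, so the Hom into $E(M)$ does not split factor by factor. To bypass this, I would use that $\prod_k E(M_k)$ is injective and contains $M$, hence also contains a copy of $E(M)$. This yields an embedding
\[
  \text{Hom}_R(M/N, E(M)) \hookrightarrow \text{Hom}_R\!\left(M/N, \prod_k E(M_k)\right) \cong \prod_k \text{Hom}_R(M/N, E(M_k)).
\]
Combined with $M/N \cong \bigoplus_i (M_i/N_i)$, each factor decomposes further as $\prod_i \text{Hom}_R(M_i/N_i, E(M_k))$. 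The hypothesis that $N_i$ is $M_k$-dense in $M_i$ makes each individual $\text{Hom}_R(M_i/N_i, E(M_k))$ vanish by Proposition \ref{ndense}(b), collapsing everything to zero and forcing $\text{Hom}_R(M/N, E(M)) = 0$.

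The main obstacle, as indicated, is the failure of the injective-hull construction to commute with arbitrary direct sums. Replacing the codomain $E(M)$ by the larger injective $\prod_k E(M_k)$ resolves this cleanly, since it lets the domain direct-sum decomposition and the codomain direct-product decomposition be exploited simultaneously. This is why, unlike Proposition \ref{noefinite}, no noetherian or finiteness hypothesis on $\Lambda$ is needed here.
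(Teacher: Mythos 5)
Your proof is correct, and the $(\Leftarrow)$ direction takes a genuinely different route from the paper's. The paper argues elementwise: it first upgrades the hypothesis via Lemma \ref{ndense2} to ``$N_i$ is $M$-dense in $M_i$,'' then, given $x=(x_1,\dots,x_\ell)\in M$ (finite support) and $0\neq y\in M$, it iteratively produces $r_1,\dots,r_\ell$ with $x_k r_1\cdots r_k\in N_k$ and $yr_1\cdots r_k\neq 0$, so that $r=r_1\cdots r_\ell$ witnesses density. Your argument instead stays entirely at the level of Hom-functors: embedding $E(M)$ into the injective module $\prod_k E(M_k)$ and using $M/N\cong\bigoplus_i(M_i/N_i)$ reduces everything to the vanishing of $\mathrm{Hom}_R(M_i/N_i,E(M_k))$, which is exactly Proposition \ref{ndense}(b). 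Both are valid and neither needs a noetherian or finiteness hypothesis --- the paper's iteration terminates because elements of a direct sum have finite support, while your product embedding sidesteps the issue that $E(M)$ need not equal $\bigoplus_k E(M_k)$. What your approach buys is a cleaner conceptual explanation of why no finiteness is needed, and it subsumes the first equivalence of Lemma \ref{ndense2} rather than invoking it; what the paper's approach buys is a concrete, self-contained verification of the density condition that exhibits the element $r$ explicitly. Your $(\Rightarrow)$ direction is also slightly more direct than the paper's (which first passes through Lemma \ref{ndense2}), but is substantively the same restriction-to-components argument.
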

\begin{proof}
Suppose $N\leq^\text{den}M$. Then $N$ is $M$-dense in $M$ by the definition. From Lemma \ref{ndense2}  $N$ is $M_j$-dense in $M$ for all $j\in \Lambda$.
Let $x_i\in M_i$ and $0\neq y_j\in M_j$ be arbitrary  for each $i, j\in \Lambda$. Since $(0,\dots,0,x_i,0,\dots) \in M$ and $0\neq y_j\in M_j$, there exists $r\in R$ such that $(0,\dots,0,x_i,0,\dots)r \in N$ and $y_jr\neq0$. Since $x_ir\in N_i$ and $y_jr\neq0$, $N_i$ is $M_j$-dense in $M_i$ for all $i, j\in \Lambda$.

Conversely, suppose $N_i$ is $M_j$-{dense} in $M_i$ for all $i, j \in \Lambda$.
From Lemma \ref{ndense2}, $N_i$ is $\oplus_{k\in \Lambda} M_k$-{dense} in $M_i$ for all $i \in \Lambda$.
Let $x\in M$ and $0\neq y\in M$ be arbitrary. Then  there exists $\ell\in\mathbb{N}$ such that $x=(x_1, \dots, x_\ell)\in\oplus_{k=1}^\ell M_k\leq M$. Since $N_1$ is $M$-dense in $M_1$, there exists $r_1\in R$ such that $x_1r_1\in N_1$ and $0\neq yr_1\in M$. Also, since $N_2$ is $M$-dense in $M_2$, there exists $r_2\in R$ such that $x_2r_1r_2\in N_2$ and $0\neq yr_1r_2\in M$. 
By the similar processing, we have $r=r_1r_2\cdots r_\ell\in R$ such that $xr\in\oplus_{k=1}^\ell N_k\leq N$ and $yr\neq0$.
Therefore $N\leq^\text{den}M$.
\end{proof}

From Propositions \ref{noefinite} and \ref{ndense5}, we have a characterization for the rational hull of the direct sum of modules to be the direct sum of the rational hulls of each module.
\begin{thm}\label{directsum} Let $M=\bigoplus_{k\in \Lambda} M_k$ where $M_k$ is a right $R$-module and $\Lambda$ is any index set.
If either $R$ is right noetherian or $|\Lambda|$ is finite, then  $\widetilde{E}(M)=\bigoplus_{k\in \Lambda} \widetilde{E}(M_k)$ if and only if $M_i$ is $M_j$-{dense} in $\widetilde{E}(M_i)$ for all $i, j \in \Lambda$.
\end{thm}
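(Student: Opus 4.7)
The plan is to combine the three ingredients that have just been assembled: the upper bound from Proposition \ref{noefinite}, the dense-in-rational-hull criterion from Proposition \ref{pro137}, and the coordinate-wise characterization of density in a direct sum from Proposition \ref{ndense5}, with Lemma \ref{ndense2} serving as a bridge to translate $M_j$-density into $\widetilde{E}(M_j)$-density.

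First I would observe that under either of the two hypotheses, $E(M)=\bigoplus_{k\in\Lambda}E(M_k)$, and since $\widetilde{E}(M_k)\leq E(M_k)$ for each $k$, we have the chain
\[
M\leq \bigoplus_{k\in\Lambda}\widetilde{E}(M_k)\leq E(M).
\]
By Proposition \ref{pro137}, the inclusion $\bigoplus_{k\in\Lambda}\widetilde{E}(M_k)\leq\widetilde{E}(M)$ is equivalent to $M\leq^{\text{den}}\bigoplus_{k\in\Lambda}\widetilde{E}(M_k)$. Combined with the reverse inclusion $\widetilde{E}(M)\leq\bigoplus_{k\in\Lambda}\widetilde{E}(M_k)$ already established in Proposition \ref{noefinite}, this reduces the theorem to showing
\[
M\leq^{\text{den}}\bigoplus_{k\in\Lambda}\widetilde{E}(M_k)\ \Longleftrightarrow\ M_i\leq^{\text{den}}_{M_j}\widetilde{E}(M_i)\text{ for all }i,j\in\Lambda.
\]

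For the equivalence displayed above, I would apply Proposition \ref{ndense5} with the submodules $N_k:=M_k$ sitting inside the ambient modules $\widetilde{E}(M_k)$: it yields that $\bigoplus_{k\in\Lambda}M_k\leq^{\text{den}}\bigoplus_{k\in\Lambda}\widetilde{E}(M_k)$ if and only if $M_i$ is $\widetilde{E}(M_j)$-dense in $\widetilde{E}(M_i)$ for every $i,j\in\Lambda$. Then Lemma \ref{ndense2} (applied to the submodule $M_i$ of $\widetilde{E}(M_i)$, with the single module $M_j$ versus $\widetilde{E}(M_j)=\widetilde{E}(M_j)$) converts $\widetilde{E}(M_j)$-density to $M_j$-density, giving exactly the right-hand condition in the theorem.

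The routine but necessary step is to check that Proposition \ref{ndense5} genuinely applies here: it is stated for $N_i\leq M_i$ and concludes about $\bigoplus N_k\leq^{\text{den}}\bigoplus M_k$, so I simply rename its $M_k$'s as $\widetilde{E}(M_k)$'s. I do not anticipate any real obstacle; the only point to watch is that the noetherian-or-finite hypothesis is invoked once, to guarantee $\bigoplus_{k\in\Lambda}\widetilde{E}(M_k)\leq E(M)$ so that Proposition \ref{pro137} is applicable, and also of course to invoke Proposition \ref{noefinite} itself. The rest is pure bookkeeping with the density relations already developed in the preceding two pages.
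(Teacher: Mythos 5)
Your proposal is correct and follows essentially the same route as the paper's proof: both reduce the theorem to the statement $M\leq^{\text{den}}\bigoplus_{k\in\Lambda}\widetilde{E}(M_k)$ via Propositions \ref{noefinite} and \ref{pro137}, and then translate that into the coordinate-wise condition using Proposition \ref{ndense5} together with Lemma \ref{ndense2}. The only cosmetic difference is that you chain the equivalences while the paper argues the two implications separately; the ingredients and their roles are identical.
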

\begin{proof} Suppose $\widetilde{E}(M)=\oplus_{k\in \Lambda} \widetilde{E}(M_k)$.
Since $M\leq^\text{den}\oplus_{k\in \Lambda} \widetilde{E}(M_k)$,
from Proposition \ref{ndense5} $M_i$ is $\widetilde{E}(M_j)$-dense in $\widetilde{E}(M_i)$ for all $i, j \in \Lambda$.
Thus, $M_i$ is $M_j$-dense in $\widetilde{E}(M_i)$ for all $i, j \in \Lambda$ from Lemma \ref{ndense2}.

Conversely, suppose $M_i$ is $M_j$-{dense} in $\widetilde{E}(M_i)$ for all $i, j \in \Lambda$.
Then $M_i$ is $\widetilde{E}(M_j)$-dense in $\widetilde{E}(M_i)$ for all $i, j \in \Lambda$ from Lemma \ref{ndense2}. 
Thus, from Proposition \ref{ndense5} $M\leq^{\text{den}}\oplus_{k\in \Lambda} \widetilde{E}(M_k)$. 
Hence $\oplus_{k\in \Lambda} \widetilde{E}(M_k)\leq\widetilde{E}(M)$ from Proposition \ref{pro137}. 
Also, from Proposition \ref{noefinite} $\widetilde{E}(M)\leq\oplus_{k\in \Lambda} \widetilde{E}(M_k)$.
Therefore $\widetilde{E}(M)=\oplus_{k\in \Lambda} \widetilde{E}(M_k)$.
\end{proof}

The next examples show that the condition ``$R$ is right noetherian or $|\Lambda|$ is finite" in Theorem \ref{directsum} is not superfluous.
\begin{exam}\label{sumexam}(i) Let $R=\langle \oplus_{k\in\Lambda}\mathbb{Z}_2, 1\rangle$ and $M=\oplus_{k\in\Lambda}M_k$ where $M_k=\mathbb{Z}_2$. 
Note that $R$ is not noetherian.
Since $\mathbb{Z}_2$ is an injective $R$-module, $\widetilde{E}(\mathbb{Z}_2)=\mathbb{Z}_2$. Thus $M_i$ is $M_j$-{dense} in $\widetilde{E}(M_i)$ for all $i, j \in \Lambda$. However, $\widetilde{E}(\oplus_{k\in\Lambda}\mathbb{Z}_2)=\prod_{k\in\Lambda}\mathbb{Z}_2\gneq \oplus_{k\in\Lambda}\mathbb{Z}_2=\oplus_{k\in\Lambda}\widetilde{E}(\mathbb{Z}_2)$.

(ii) Let $R=\{(a_k)\in \prod_{k\in\Lambda}\mathbb{Z}\,|\, a_k~ \text{is eventually constant}\}$ and $M=\oplus_{k\in\Lambda}\mathbb{Z}$.
Note that $R$ is not noetherian.
Then $\widetilde{E}(\mathbb{Z})=\mathbb{Q}$ and $\mathbb{Z}$ is $\mathbb{Z}$-{dense} in $\widetilde{E}(\mathbb{Z})$. However, $\widetilde{E}(\oplus_{k\in\Lambda}\mathbb{Z})=\prod_{k\in\Lambda}\mathbb{Q}\gneq \oplus_{k\in\Lambda}\mathbb{Q}=\oplus_{k\in\Lambda}\widetilde{E}(\mathbb{Z})$.
\end{exam}

The next example illustrates  Theorem \ref{directsum}.
\begin{exam}\label{sumexam2} Consider $M=\mathbb{Z}\oplus\mathbb{Z}_p$ as a $\mathbb{Z}$-module where $p$ is prime.
Then $\mathbb{Z}_p$ is $\mathbb{Z}$-dense in $\widetilde{E}(\mathbb{Z}_p)=\mathbb{Z}_p$, but $\mathbb{Z}$ is not $\mathbb{Z}_p$-dense in $\mathbb{Q}$ because for $\frac{1}{p}\in \mathbb{Q}, \overline{1}\in \mathbb{Z}_p$, there is no element $t\in \mathbb{Z}$ such that $t\frac{1}{p}\in\mathbb{Z}$ and $t\overline{1}\neq 0$.
Thus, from Theorem \ref{directsum} $\widetilde{E}(M)=\mathbb{Z}_{(p)}\oplus\mathbb{Z}_p\lneq \mathbb{Q}\oplus\mathbb{Z}_p=\widetilde{E}(\mathbb{Z})\oplus\widetilde{E}(\mathbb{Z}_p)$. (See Example \ref{notds} for details.)
\end{exam}

\begin{cor}\label{noefinite2} Let $M$ be a right $R$-module.
If either $R$ is right noetherian or $\Lambda$ is a finite index set, then  $\widetilde{E}(M^{(\Lambda)})=(\widetilde{E}(M))^{(\Lambda)}$.
\end{cor}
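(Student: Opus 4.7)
The plan is to derive this corollary as a direct specialization of Theorem \ref{directsum} in the case where every summand $M_k$ equals the same fixed module $M$. Writing $M^{(\Lambda)}=\bigoplus_{k\in\Lambda}M_k$ with $M_k=M$ for all $k\in\Lambda$, the hypothesis of Theorem \ref{directsum} (that $R$ is right noetherian or $|\Lambda|$ is finite) transfers verbatim, so the conclusion $\widetilde{E}(M^{(\Lambda)})=\bigoplus_{k\in\Lambda}\widetilde{E}(M_k)=(\widetilde{E}(M))^{(\Lambda)}$ will follow once the density criterion ``$M_i$ is $M_j$-dense in $\widetilde{E}(M_i)$ for all $i,j\in\Lambda$'' is verified in this setting.

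Since $M_i=M_j=M$ for all indices, that criterion collapses to the single requirement that $M$ is $M$-dense in $\widetilde{E}(M)$. By the remark immediately following Definition \ref{redense}, being $M$-dense in a module coincides with being dense in it in the ordinary sense; thus the requirement is simply $M\leq^{\text{den}}\widetilde{E}(M)$. This, however, is built into the very definition of the rational hull: by Proposition \ref{pro137}, $M\leq^{\text{den}}\widetilde{E}(M)$ holds automatically. Hence the hypothesis of Theorem \ref{directsum} is met trivially, and the desired equality follows.

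There is no substantive obstacle here; the content of the corollary is packaged entirely inside Theorem \ref{directsum}, and the only ``verification'' needed is a tautological one about $M$ being dense in its own rational hull. I would therefore write the proof in a single short paragraph, citing Theorem \ref{directsum} and Proposition \ref{pro137} (or equivalently the definition of $\widetilde{E}(M)$) and noting that the $M_j$-density condition reduces to the standard density $M\leq^{\text{den}}\widetilde{E}(M)$.
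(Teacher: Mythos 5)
Your proof is correct and is exactly the specialization of Theorem \ref{directsum} that the paper intends (the corollary is stated there without proof, as an immediate consequence of that theorem). One small citation point: the remark after Definition \ref{redense} literally concerns $N$ being $M$-dense in $M$ itself, whereas here you need $M$ to be $M$-dense in $\widetilde{E}(M)$; this follows either directly from the definition (ordinary density of $M$ in $\widetilde{E}(M)$ tests all nonzero elements of $\widetilde{E}(M)$, hence in particular those of $M$) or from the second equivalence of Lemma \ref{ndense2} together with Proposition \ref{pro137}.
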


\begin{cor}\label{cordirectsum} Let $\{M_k\}_{k\in\Lambda}$ be a class of rationally complete right $R$-module for any index set $\Lambda$.
If either $R$ is right noetherian or $|\Lambda|$ is finite, then $M=\bigoplus_{k\in \Lambda} M_k$ is rationally complete.
\end{cor}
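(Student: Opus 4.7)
The plan is to deduce this directly from Theorem \ref{directsum}. Since each $M_k$ is rationally complete, $\widetilde{E}(M_k)=M_k$ for every $k\in\Lambda$, so the conclusion $\widetilde{E}(M)=\bigoplus_{k\in\Lambda}\widetilde{E}(M_k)=\bigoplus_{k\in\Lambda}M_k=M$ of Theorem \ref{directsum} is exactly the statement that $M$ is rationally complete. Thus it suffices to verify the hypothesis of Theorem \ref{directsum}, namely that $M_i$ is $M_j$-dense in $\widetilde{E}(M_i)=M_i$ for all $i,j\in\Lambda$.

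I would carry this out by unraveling Definition \ref{redense}. Given $m\in\widetilde{E}(M_i)=M_i$ and $0\neq x\in M_j$, I need $x\cdot m^{-1}M_i\neq 0$. Because $m\in M_i$ already, we have $m\cdot 1=m\in M_i$, so $1\in m^{-1}M_i$, whence $x=x\cdot 1\in x\cdot m^{-1}M_i$, which is nonzero since $x\neq 0$. Therefore $M_i\leq^{\text{den}}_{M_j}\widetilde{E}(M_i)$ holds trivially for every pair $i,j\in\Lambda$.

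Applying Theorem \ref{directsum} (whose ``either $R$ is right noetherian or $|\Lambda|$ is finite'' assumption is the hypothesis of the corollary) then gives $\widetilde{E}(M)=\bigoplus_{k\in\Lambda}\widetilde{E}(M_k)=\bigoplus_{k\in\Lambda}M_k=M$, so $M$ is rationally complete. There is no real obstacle here; the corollary is essentially a specialization of Theorem \ref{directsum} to the case where the summands are already equal to their own rational hulls, and the only small observation is that the relative density of $M_i$ in itself with respect to any module $M_j$ is automatic.
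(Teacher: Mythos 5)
Your proposal is correct and follows exactly the paper's own route: the paper likewise notes that $\widetilde{E}(M_i)=M_i$ makes the $M_j$-density of $M_i$ in $\widetilde{E}(M_i)$ automatic and then applies Theorem \ref{directsum}. Your explicit verification that $1\in m^{-1}M_i$ just spells out the triviality the paper leaves implicit.
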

\begin{proof} Since $\widetilde{E}(M_i)=M_i$, $M_i$ is $M_j$-{dense} in $\widetilde{E}(M_i)$ for all $i, j \in \Lambda$.
From Theorem \ref{directsum},  $\widetilde{E}(M)=\oplus_{k\in \Lambda} \widetilde{E}(M_k)=\oplus_{k\in \Lambda} M_k=M$. Therefore $\oplus_{k\in \Lambda} M_k$ is rationally complete.
\end{proof}

\begin{prop}[{\cite[Proposition 1.9]{sto}}]
	Let $\{S_i\}_{i\in \Lambda}$ be a set of nonisomorphic simple modules, representing all singular simple modules. Then every module containing the module $P=\bigoplus_{i\in\Lambda} S_i$ is rationally complete.
\end{prop}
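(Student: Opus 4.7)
My plan is to proceed by contradiction: assume $M \supseteq P$ is not rationally complete and use the hypothesis on $P$ to produce a homomorphism that violates Proposition \ref{pro138}. If $M$ is not rationally complete, pick $x \in \widetilde{E}(M) \setminus M$, so that $(xR+M)/M$ is a nonzero cyclic submodule of $\widetilde{E}(M)/M$.

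The core of the argument is to locate a singular simple \emph{subquotient} of $\widetilde{E}(M)/M$. Since $(xR+M)/M$ is cyclic, Zorn's lemma supplies a submodule $L$ with $M \leq L \subsetneq xR+M$ for which the quotient $T := (xR+M)/L \cong xR/(xR\cap L) \cong R/x^{-1}L$ is simple. The crucial step is to verify that $T$ is \emph{singular}, i.e.\ that $x^{-1}L \leq^\text{ess} R$. This follows from the essential pullback property: $M \leq^\text{ess} \widetilde{E}(M)$ forces $xR \cap M \leq^\text{ess} xR$, and pulling this essential submodule back along the surjection $R \twoheadrightarrow xR$, $r \mapsto xr$, yields $x^{-1}M \leq^\text{ess} R$; since $x^{-1}L \supseteq x^{-1}M$, the same holds for $x^{-1}L$, so $T \cong R/x^{-1}L$ is indeed singular. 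The hypothesis on $P$ now forces $T \cong S_i$ for some $i \in \Lambda$.

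To conclude, since $S_i \hookrightarrow P \leq M$, composing the canonical projection $(xR+M)/M \twoheadrightarrow T$ with this embedding gives a nonzero element of $\text{Hom}_R((xR+M)/M, M)$. On the other hand, Proposition \ref{pro138}(c), applied with the dense inclusion $M \leq^\text{den} \widetilde{E}(M)$ and the intermediate submodule $xR+M$, gives $\text{Hom}_R((xR+M)/M, \widetilde{E}(M))=0$, and a fortiori $\text{Hom}_R((xR+M)/M, M)=0$. This contradiction finishes the proof. The only delicate point is the singularity of $T$; everything else is a formal consequence of Proposition \ref{pro138} and the hypothesis that $P$ realizes every singular simple module.
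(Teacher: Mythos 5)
Your argument is correct and complete. The paper itself offers no proof of this proposition --- it is stated with a citation to Storrer --- so there is nothing internal to compare against, but your reconstruction is the standard one and every step checks out: the cyclic module $(xR+M)/M$ has a simple quotient $T\cong R/x^{-1}L$ by Zorn; $T$ is singular because $M\leq^{\text{ess}}\widetilde{E}(M)$ forces $x^{-1}M\leq^{\text{ess}}R_R$ (equivalently, $\widetilde{E}(M)/M$ is a singular module, so all its subquotients are singular); hence $T\cong S_i\hookrightarrow P\leq M$, producing a nonzero map in $\mathrm{Hom}_R\bigl((xR+M)/M,\,M\bigr)$, which contradicts Proposition \ref{pro138}(c) applied to the dense inclusion $M\leq^{\text{den}}\widetilde{E}(M)$. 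The one point worth stating explicitly is that $x\notin L$ (immediate from $xR+L=xR+M\neq L$), so that $T\neq 0$; otherwise the argument is airtight.
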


\section{The endomorphism ring of a module over a right ring of quotients of a ring}

In this section, we obtain some condition to be $\text{End}_R(M)=\text{End}_H(M)$ where $H$ is a right ring of quotients of a ring $R$.
Recall that an extension ring $H$ of a ring $R$ is called a \emph{right ring of quotients} of
$R$ if for any two elements $x\neq0$ and $y$ of $H$, there exists an element $r\in R$ such that $xr\neq0$ and $yr\in R$. 

\begin{thm}\label{subprojectivemaximal}
Let $M$ be a right $H$-module where $H$ is a right ring of quotients of a ring $R$.
If $R$ is $M_R$-dense in $H_R$ then $\emph{End}_R(M)=\emph{End}_H(M)$.
\end{thm}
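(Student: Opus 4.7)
The plan is to show the nontrivial inclusion $\text{End}_R(M) \subseteq \text{End}_H(M)$; the reverse inclusion is immediate since every $H$-linear map is automatically $R$-linear (as $R \subseteq H$).

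The strategy is to measure the failure of an arbitrary $R$-endomorphism $\varphi$ to be $H$-linear by a single $R$-module map and then use the hypothesis to kill it. Concretely, I would fix $\varphi \in \text{End}_R(M)$ and $m \in M$, and define
\[
\psi_m : H_R \longrightarrow M_R, \qquad \psi_m(h) = \varphi(mh) - \varphi(m)h.
\]
A direct check shows $\psi_m$ is $R$-linear: for $r \in R$,
\[
\psi_m(hr) = \varphi(m(hr)) - \varphi(m)(hr) = \varphi((mh)r) - (\varphi(m)h)r = \varphi(mh)r - \varphi(m)hr = \psi_m(h)r,
\]
using that $\varphi$ is $R$-linear. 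Moreover, for $h \in R$ we have $\varphi(mh) = \varphi(m)h$, so $\psi_m$ vanishes on $R$. Thus $\psi_m$ factors through a well-defined $R$-homomorphism $\overline{\psi_m} : H/R \to M$.

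Now I would invoke the hypothesis that $R \leq^{\text{den}}_M H$ together with Proposition \ref{ndense}: taking $P = H$ in part (c) gives $\text{Hom}_R(H/R, M) = 0$. Hence $\overline{\psi_m} = 0$, i.e.\ $\psi_m \equiv 0$, which means $\varphi(mh) = \varphi(m)h$ for every $h \in H$. Since $m \in M$ was arbitrary, $\varphi \in \text{End}_H(M)$, completing the proof.

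The argument is essentially a one-step density/extension argument, so I do not expect a serious obstacle; the only subtle point is choosing the right $R$-module map whose vanishing is equivalent to $H$-linearity, and then recognizing that the relative density hypothesis $R \leq^{\text{den}}_M H$ is precisely what makes $\text{Hom}_R(H/R, M) = 0$ via Proposition \ref{ndense}.
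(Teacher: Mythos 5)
Your proof is correct. The only difference from the paper's argument is one of packaging: the paper works directly with the element-wise definition of relative density --- it assumes $\varphi(mt)-\varphi(m)t\neq 0$ for some $m\in M$, $t\in H$, uses $R\leq^{\text{den}}_M H$ to produce $r\in R$ with $tr\in R$ and $(\varphi(mt)-\varphi(m)t)r\neq 0$, and derives a contradiction from $\varphi(mt)r-\varphi(m)(tr)=\varphi(mtr)-\varphi(mtr)=0$. You instead bundle the entire defect into the single $R$-homomorphism $\psi_m\colon h\mapsto \varphi(mh)-\varphi(m)h$, observe it kills $R$, and quote Proposition \ref{ndense}((a)$\Rightarrow$(c)) with $P=H$ to conclude $\operatorname{Hom}_R(H/R,M)=0$. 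The two are logically the same density argument; yours is slightly more conceptual and avoids the proof by contradiction at the cost of leaning on Proposition \ref{ndense}, while the paper's is self-contained and elementary. Both are complete and correct.
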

\begin{proof} Since $\text{End}_H(M) \subseteq \text{End}_R(M)$, it suffices to show that $\text{End}_R(M) \subseteq \text{End}_H(M):$  
 Let $\varphi \in \text{End}_R(M)$ be arbitrary. Assume that $\varphi \notin \text{End}_H(M)$. Then there exist $m \in M, t \in H$ such that $\varphi(mt)-\varphi(m)t \neq 0.$ Since $R$ is $M_R$-dense in $H_R$, there exists $r \in R$ such that $(\varphi(mt)-\varphi(m)t)r \neq 0$ and $tr \in R$. Hence $0\neq (\varphi(mt)-\varphi(m)t)r=\varphi(mt)r-\varphi(m)(tr)=\varphi(mtr)-\varphi(mtr)=0$, a contradiction. Therefore $\text{End}_R(M)=\text{End}_H(M)$. 
\end{proof}

\begin{rem}(i) $R$ is always $E(R)$-dense in $H_R$ where $H$ is a right ring of quotients of $R$. For, let $x\in H_R$ and $0\neq y\in E(R)$.
Since $H\leq^\text{ess} E(R)_R$, there exists $s\in R$ such that $0\neq ys\in H$. Also, $xs\in H$.
Since $R\leq^\text{den} H_R$, there exists $t\in R$ such that $xst\in R$ and  $0\neq yst$. Therefore $R$ is $E(R)$-dense in $H_R$.\\
(ii) If $M$ is a nonsingular $R$-module, then $R$ is $M_R$-dense in $H_R$:
For, let $0\neq m\in M$ and $t\in H$ be arbitrary. Take $t^{-1}R=\{r\in R\,|\,tr\in R\}$ a right ideal of $R$. Note that $t^{-1}R\leq^{\textrm{ess}}R_R$.
Since $t^{-1}R \nleq^\text{ess} \mathbf{r}_R(m)$, there exists $r\in t^{-1}R$ and $r\notin\mathbf{r}_R(m)$. Thus, $tr\in R$ and $mr\neq 0$. Therefore $R$ is $M_R$-dense in $H_R$.\\
(iii)	If $M$ is a submodule of a projective right $H$-module, then $R$ is $M_R$-dense in $H_R$. For, let $P$ be a projective right $R$-module including $M$, that is, $M \leq P$ where $P \leq^{\oplus } H^{(\Lambda)}$ with  some index set $\Lambda$. Then there is a right $R$-module $K \le E(P)$ such that $E(P)=E(M) \oplus K.$ Since $R \le^\text{den} H_R$, we get that $R$ is $H^{(\Lambda)}$-dense in $H_R$ from Lemma \ref{ndense2}. Hence $R$ is $P$-dense in $H_R$. Thus $\text{Hom}_R(H/R,E(P))=0$ from Proposition \ref{ndense}. Since $\text{Hom}_R(H/R,E(P)) \cong \text{Hom}_R(H/R,E(M)) \oplus \text{Hom}_R(H/R,K)$, we obtain $\text{Hom}_R(H/R,E(M))=0$. It follows that $R$ is $M_R$-dense in $H_R.$
\end{rem}

\begin{cor}\label{subprojectivemaximal2}
Let $M$ be a projective right $H$-module where $H$ is a right ring of quotients of $R$. Then $\emph{End}_R(M)=\emph{End}_H(M)$.
\end{cor}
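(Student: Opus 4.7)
The plan is to observe that the statement is essentially a direct specialization of Theorem~\ref{subprojectivemaximal} combined with part (iii) of the preceding remark. By Theorem~\ref{subprojectivemaximal}, it suffices to verify that $R$ is $M_R$-dense in $H_R$; everything else is automatic.

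First, I would use the definition of right ring of quotients together with Proposition~\ref{pro138}(b) to record that $R \leq^{\text{den}} H_R$. Applying Lemma~\ref{ndense2}, density of $R$ in $H_R$ with respect to $H$ itself propagates to density with respect to any direct sum $H^{(\Lambda)}$, i.e., $R$ is $H^{(\Lambda)}$-dense in $H_R$ for every index set $\Lambda$.

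Second, since $M$ is a projective right $H$-module, there is an index set $\Lambda$ and a right $H$-module $N$ with $M \oplus N \cong H^{(\Lambda)}$, so $M$ is isomorphic to a direct summand (in particular a submodule) of $H^{(\Lambda)}$ as a right $H$-module, and therefore also as a right $R$-module. From the splitting $E(H^{(\Lambda)}) \cong E(M) \oplus K$ for some right $R$-module $K$, we get
\[
\text{Hom}_R(H/R, E(H^{(\Lambda)})) \cong \text{Hom}_R(H/R, E(M)) \oplus \text{Hom}_R(H/R, K).
\]
The left-hand side vanishes by Proposition~\ref{ndense} applied to the density from the previous paragraph, so $\text{Hom}_R(H/R, E(M)) = 0$. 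Invoking Proposition~\ref{ndense} in the other direction yields that $R$ is $M_R$-dense in $H_R$.

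Finally, I apply Theorem~\ref{subprojectivemaximal} to conclude $\text{End}_R(M) = \text{End}_H(M)$. There is no serious obstacle here: the preceding remark (iii) already does the key density verification, and the corollary is a clean specialization. The only thing to be careful about is distinguishing the $H$-module and $R$-module structures on $M$ when passing through the projective decomposition, but this is routine since every $H$-module inherits an $R$-module structure through the ring extension $R \subseteq H$.
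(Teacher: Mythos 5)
Your proposal is correct and follows essentially the same route the paper intends: the corollary is stated immediately after Remark (iii), whose argument (via Lemma \ref{ndense2}, the splitting $E(H^{(\Lambda)})\cong E(M)\oplus K$, and Proposition \ref{ndense}) you reproduce faithfully before specializing Theorem \ref{subprojectivemaximal}. No gaps.
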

The next example illustrates Corollary \ref{subprojectivemaximal2}.
\begin{exam} Let $Q=\prod_{n=1}^\infty \mathbb{Z}_2$ and  $R=\{(a_n) \in
Q~|~a_n$ is eventually constant$\}$.
Then $Q$ is a maximal right ring of quotients of $R$. Hence from Theorem \ref{subprojectivemaximal},
End$_R(Q^{(\Lambda)})$= End$_Q(Q^{(\Lambda)})=\mathsf{CFM}_\Lambda(Q)$.
\end{exam}

\begin{thm}\label{maximalrational} Let $M$ be a finitely generated free $R$-module with $S=\emph{End}_R(M)$.
If either $R$ is right noetherian or $\Lambda$ is any finite index set,
then $\emph{End}_R\left(\widetilde{E}(M^{(\Lambda)})\right)=\mathsf{CFM}_{\Lambda}\left(Q(S)\right)$.
\end{thm}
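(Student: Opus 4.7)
The plan is to identify $\widetilde{E}(M^{(\Lambda)})$ as a free right $Q(R)$-module, apply Corollary \ref{subprojectivemaximal2} to replace $\text{End}_R$ with $\text{End}_{Q(R)}$, and then rewrite the resulting column-finite matrix ring over $Q(R)$ as a column-finite matrix ring over $Q(S)$ via the Morita-invariance of the maximal right ring of quotients.

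First I would write $M \cong R^n$ for some positive integer $n$, so that $S = \text{End}_R(M) \cong \mathbb{M}_n(R)$. Under the standing hypothesis (either $R$ is right noetherian or $|\Lambda|$ is finite), two applications of Corollary \ref{noefinite2} --- the inner one using only the finite index set of size $n$ --- yield
\[
\widetilde{E}(M^{(\Lambda)}) \;=\; \widetilde{E}(M)^{(\Lambda)} \;=\; \bigl(\widetilde{E}(R)^{n}\bigr)^{(\Lambda)} \;\cong\; Q(R)^{(n\Lambda)},
\]
where $n\Lambda$ denotes the disjoint union of $n$ copies of $\Lambda$ and we have used $\widetilde{E}(R) = Q(R)$.

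Next, since $Q(R)^{(n\Lambda)}$ is free, hence projective, as a right $H:=Q(R)$-module, and $Q(R)$ is a right ring of quotients of $R$, Corollary \ref{subprojectivemaximal2} gives
\[
\text{End}_R\bigl(Q(R)^{(n\Lambda)}\bigr) \;=\; \text{End}_{Q(R)}\bigl(Q(R)^{(n\Lambda)}\bigr) \;=\; \mathsf{CFM}_{n\Lambda}(Q(R)),
\]
the second equality being the standard identification of the endomorphism ring of a free module with the column-finite matrix ring over its ring of scalars. By the classical Morita-invariance of the maximal right ring of quotients, $Q(S) = Q(\mathbb{M}_n(R)) = \mathbb{M}_n(Q(R))$, and regrouping the $n\Lambda$-indexed basis into $\Lambda$-many blocks of size $n$ produces the natural ring isomorphism $\mathsf{CFM}_{n\Lambda}(Q(R)) \cong \mathsf{CFM}_{\Lambda}(\mathbb{M}_n(Q(R))) = \mathsf{CFM}_{\Lambda}(Q(S))$, which assembles into the desired equality.

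The main obstacle is the Morita-invariance identity $Q(\mathbb{M}_n(R)) = \mathbb{M}_n(Q(R))$, which is classical and which I would cite rather than reprove. A minor bookkeeping point is to verify that the block-regrouping preserves the column-finite condition, but this is automatic because each block of size $n$ has only finitely many columns; column-finiteness in the $\Lambda$-indexing is exactly what column-finiteness in the $n\Lambda$-indexing degenerates to.
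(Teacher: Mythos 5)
Your proposal is correct and follows essentially the same route as the paper: decompose $M=R^{(n)}$, use Corollary \ref{noefinite2} twice to identify $\widetilde{E}(M^{(\Lambda)})$ with a free $Q(R)$-module, pass from $\mathrm{End}_R$ to $\mathrm{End}_{Q(R)}$ via Corollary \ref{subprojectivemaximal2}, and finish with Utumi's identity $Q(\mathsf{Mat}_n(R))=\mathsf{Mat}_n(Q(R))$. The only cosmetic difference is that you flatten to $Q(R)^{(n\Lambda)}$ and regroup the matrix indices into blocks, whereas the paper computes $\mathrm{End}_{Q(R)}\bigl((Q(R)^{(n)})^{(\Lambda)}\bigr)=\mathsf{CFM}_{\Lambda}\bigl(\mathsf{Mat}_n(Q(R))\bigr)$ directly.
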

\begin{proof} Let $M=R^{(n)}$ for some $n\in\mathbb{N}$.
From Corollary \ref{noefinite2}, $\widetilde{E}(R^{(n)})=\widetilde{E}(R)^{(n)}=Q(R)^{(n)}$ as $\widetilde{E}(R)=Q(R)$.
Hence $\text{End}_R\left(\widetilde{E}(M^{(\Lambda)})\right)=\text{End}_R\left(\widetilde{E}(M)^{(\Lambda)}\right)=\text{End}_R\left((Q(R)^{(n)})^{(\Lambda)}\right)=\text{End}_{Q(R)}\left((Q(R)^{(n)})^{(\Lambda)}\right)=\mathsf{CFM}_{\Lambda}\left(\text{End}_{Q(R)}(Q(R)^{(n)})\right)=\mathsf{CFM}_{\Lambda}\left(\mathsf{Mat}_n(Q(R))\right)$  from Theorem \ref{subprojectivemaximal}.
Therefore $\text{End}_R\left(\widetilde{E}(M^{(\Lambda)})\right)=\mathsf{CFM}_{\Lambda}\left(Q(\text{End}_R(M))\right)$ because $\mathsf{Mat}_n(Q(R))=Q(\mathsf{Mat}_n(R))$ by \cite[2.3]{ut2} and $\text{End}_R(M)=\mathsf{Mat}_n(R)$.
\end{proof}

The next result is generalized from \cite[2.3]{ut2}.
\begin{cor} Let $M$ be a finitely generated free $R$-module.
Then $Q(\emph{End}_R(M))=\emph{End}_R(\widetilde{E}(M))$.
\end{cor}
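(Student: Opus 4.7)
The plan is to derive this as an immediate specialization of Theorem \ref{maximalrational}. Since $M$ is finitely generated free, we may write $M = R^{(n)}$ for some $n \in \mathbb{N}$, and set $S = \text{End}_R(M) = \mathsf{Mat}_n(R)$. Take $\Lambda$ to be a singleton index set, which is certainly finite, so the hypothesis ``$|\Lambda|$ is finite'' of Theorem \ref{maximalrational} holds without any noetherian assumption on $R$.

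With this choice, $M^{(\Lambda)} = M$, and $\mathsf{CFM}_\Lambda(Q(S)) = Q(S)$ because the column finite matrix ring over a one-element index set is just the coefficient ring. Applying Theorem \ref{maximalrational} directly then yields
\[
\text{End}_R(\widetilde{E}(M)) \;=\; \text{End}_R\!\left(\widetilde{E}(M^{(\Lambda)})\right) \;=\; \mathsf{CFM}_\Lambda(Q(S)) \;=\; Q(S) \;=\; Q(\text{End}_R(M)),
\]
which is the desired identity.

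There is essentially no obstacle: the entire content has already been absorbed into the proof of Theorem \ref{maximalrational}, which packages together Corollary \ref{noefinite2} (to compute $\widetilde{E}(R^{(n)}) = Q(R)^{(n)}$), Theorem \ref{subprojectivemaximal} (to pass $\text{End}_R$ to $\text{End}_{Q(R)}$ on a projective $Q(R)$-module), and Utumi's identity $\mathsf{Mat}_n(Q(R)) = Q(\mathsf{Mat}_n(R))$ from \cite[2.3]{ut2}. The only thing to verify is that the hypothesis on $\Lambda$ in Theorem \ref{maximalrational} is met, which is trivial for $|\Lambda| = 1$, and that the statement genuinely generalizes \cite[2.3]{ut2}: the cited Utumi result is the case $M = R^{(n)}$ recovered via $\widetilde{E}(R^{(n)}) = Q(R)^{(n)}$, whereas the present corollary replaces the combinatorial description of the right-hand side with the intrinsic description $\text{End}_R(\widetilde{E}(M))$.
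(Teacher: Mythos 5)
Your proposal is correct and matches the paper's (implicit) derivation: the corollary is stated without proof precisely because it is the specialization of Theorem \ref{maximalrational} to a singleton index set $\Lambda$, where $\mathsf{CFM}_\Lambda(Q(S))$ collapses to $Q(S)=Q(\text{End}_R(M))$. Nothing further is needed.
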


The following example shows that the above result can not be extended to {\it flat} modules. This example also shows that $R$ is not $M_R$-dense in $Q_R$ where $Q$ is a right ring of quotients of $R$.
\begin{exam}
Let $Q=\prod_{n=1}^\infty \mathbb{Z}_2$, $R=\{(a_n) \in
Q~|~a_n$ is eventually constant$\}$, and  $I=\{ (a_n) \in
Q~|~a_n=0~\textrm{eventually}\}$.
Note that $Q=Q(R)$.
Let $M=Q/I$, which is a flat $Q$-module but not projective.
We claim that End$_Q(M) \subsetneq$ End$_R(M)$.
Indeed, define $f: M\to M$ via
$$f[(a_1, a_2, \dots, a_n, a_{n+1}, \dots)+I] = (a_1, 0, a_2, 0, \dots, a_n, 0, a_{n+1}, 0, \dots)+I,$$
for any $\overline{a} = a+I=(a_1, a_2, \dots, a_n, a_{n+1}, \dots)+I \in M$.
It is easy to see that $f(\overline{a}+\overline{b})$ = $f(\overline{a})$ + $f(\overline{b})$
for any $\overline{a}$, $\overline{b} \in M$.
Meanwhile, for any $r = (r_1, r_2, \dots, r_n, r_{n+1}, \dots)\in R$,
we have
$$(a+I)r = ar + I
= \left\{\begin{array}{ll}
         (0, \dots, 0, a_n, a_{n+1}, \dots)+I, & \mbox{if } r_n = r_{n+1} = \dots = 1; \\
         (0, \dots, 0, 0, 0, \dots)+I,         & \mbox{if } r_n = r_{n+1} = \dots = 0.
         \end{array}
  \right.$$
Note that $a+I=(0, 0, \dots, 0, a_n, a_{n+1}, \dots)+I$ for some $n\in \mathbb{N}$.
One can easily see that $f[(a+I)r]$ = $[f(a+I)]r$ for all $a\in Q$, $r\in R$.
This shows $f\in$ End$_R(M)$.
However, for $q = (0, 1, 0, 1, \dots) = q^2 \in Q$,
we have $[f(q+I)]q = 0+I$ while $f[(q+I)q] = f(q+I) \neq 0+I$.
This means $f\not\in$ End$_Q(M)$.
Thus, End$_Q(M) \subsetneq$ End$_R(M)$. Note that $R$ is not $M_R$-dense in $Q$.
 For, let $q\in Q\setminus R$ and $m=1+I\in M$. Since $(1+I)r=0+I$ for all $r\in R\setminus {1}$, it has to be $r=1$ to get $mr\neq 0+I$.
However, $qr \notin R$. 
\end{exam}

Recall that a module $M$ is said to be \emph{polyform} if every essential submodule of $M$ is a dense submodule.
\begin{lem}\label{perknmhr} 
 A module $M$ is polyform iff $\widetilde{E}(M)$ is a polyform quasi-injective module.
\end{lem}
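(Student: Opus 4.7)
The plan is to exploit the Jacobson radical $J(T)$ of $T=\text{End}_R(E(M))=\text{End}_R(E(\widetilde E(M)))$, using that the excerpt already records both $\mathbf{l}_T(M)\subseteq J(T)$ and $\widetilde E(M)=\mathbf r_{E(M)}(\mathbf{l}_T(M))$. My first step is to establish the auxiliary equivalence that $M$ is polyform if and only if $J(T)=\mathbf{l}_T(M)$. For the forward implication, given $\alpha\in J(T)$, the submodule $\ker\alpha\cap M$ is essential in $M$, so polyform plus Proposition~\ref{ndense} forces the induced map $M/(\ker\alpha\cap M)\to E(M)$ to vanish, whence $\alpha|_M=0$. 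For the converse, any map $M/N\to E(M)$ with $N\leq^\text{ess}M$ would extend to some $\tilde\gamma\in T$ whose kernel contains $N$, hence $\tilde\gamma\in J(T)=\mathbf{l}_T(M)$, giving $\gamma=0$.

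For the easy direction ($\widetilde E(M)$ polyform $\Rightarrow$ $M$ polyform), quasi-injectivity is not needed. If $N\leq^\text{ess}M$, then $N\leq^\text{ess}\widetilde E(M)$ since $M\leq^\text{ess}\widetilde E(M)$, so $N\leq^\text{den}\widetilde E(M)$ by hypothesis, and Proposition~\ref{pro87}(ii) yields $N\leq^\text{den}M$.

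For the forward direction, I would first handle the polyform property of $\widetilde E(M)$ by intersection: if $N\leq^\text{ess}\widetilde E(M)$, then $N\cap M\leq^\text{ess}M$, hence $N\cap M\leq^\text{den}M$ by polyform of $M$; combining this with $M\leq^\text{den}\widetilde E(M)$ through Proposition~\ref{pro87}(ii) gives $N\cap M\leq^\text{den}\widetilde E(M)$, and since $N\cap M\leq N\leq\widetilde E(M)$, one more application of Proposition~\ref{pro87}(ii) delivers $N\leq^\text{den}\widetilde E(M)$.

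The hard part will be showing $\widetilde E(M)$ is quasi-injective. I plan to invoke the $T$-invariance criterion used in the proof of Theorem~\ref{qiemqi} (\cite[Theorem~2.8]{m}): it suffices to show $T\widetilde E(M)\subseteq\widetilde E(M)$. Thanks to the first step, $\widetilde E(M)=\mathbf r_{E(M)}(J(T))$. For any $\vartheta\in T$, $x\in\widetilde E(M)$, and $\psi\in J(T)$, the composition $\psi\vartheta$ still lies in $J(T)$ because $J(T)$ is a two-sided ideal of $T$ (this ideal property of the Jacobson radical is the real engine of the argument). Hence $\psi(\vartheta(x))=(\psi\vartheta)(x)=0$, so $\vartheta(x)\in\mathbf r_{E(M)}(J(T))=\widetilde E(M)$, completing the proof.
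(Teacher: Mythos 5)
Your proof is correct, and for the hardest part --- quasi-injectivity of $\widetilde{E}(M)$ --- it takes a genuinely different route from the paper. The two polyform assertions are handled exactly as in the paper (intersect an essential submodule of $\widetilde{E}(M)$ with $M$ and apply Proposition~\ref{pro87}(ii) twice; restrict back down to $M$ for the converse). For quasi-injectivity the paper passes through the quasi-injective hull: it cites \cite[11.1]{wr} to get that $\widehat{M}$ is polyform, deduces $M\leq^\text{den}\widehat{M}$, concludes $\widetilde{E}(M)=\widetilde{E}(\widehat{M})$, and invokes Theorem~\ref{qiemqi}(ii). You instead prove the auxiliary identity that $M$ is polyform iff $J(T)=\mathbf{l}_T(M)$ --- both inclusions check out, since $\mathbf{l}_T(M)\subseteq J(T)$ is already recorded in the introduction and your argument that $\alpha\in J(T)$ kills $M$ via $\text{Hom}_R\left(M/(\text{Ker}\,\alpha\cap M),E(M)\right)=0$ is sound --- so that $\widetilde{E}(M)=\mathbf{r}_{E(M)}(\mathbf{l}_T(M))=\mathbf{r}_{E(M)}(J(T))$ is the right annihilator of a two-sided ideal of $T$, hence $T$-invariant, hence quasi-injective by the $T$-invariance criterion. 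This is more self-contained (no appeal to \cite{wr} or to $\widehat{M}$) and yields the pleasant byproduct that for polyform $M$ one has $\widetilde{E}(M)=\mathbf{r}_{E(M)}(J(T))$. Two small bookkeeping points: the $T$-invariance criterion for quasi-injectivity is \cite[Corollary~1.14]{m}, not \cite[Theorem~2.8]{m} (the latter is the idempotent criterion for quasi-continuity used in Theorem~\ref{qiemqi}(i)); and the $\text{Hom}$-vanishing characterization of density you need in the auxiliary step is Proposition~\ref{pro138}(b), equivalently Proposition~\ref{ndense} with $K=M$.
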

\begin{proof}
Let $X$ be essential in $\widetilde{E}(M)$.
Then $X\cap M\leq^\text{ess} M$. Hence $X\cap M$ is a dense submodule of $M$ because $M$ is polyform.
Since $X\cap M\leq^\text{den} M\leq^\text{den}\widetilde{E}(M)$, $X\cap M\leq^\text{den}\widetilde{E}(M)$.
Thus $X$ is a dense submodule of $\widetilde{E}(M)$ from Proposition \ref{pro87}(ii).
Therefore  $\widetilde{E}(M)$ is a polyform module.
In addition, $\widehat{M}$ is also a polyform module from \cite[11.1]{wr}.
Since $M\leq^\text{ess}\widehat{M}$, $M\leq^\text{den}\widehat{M}$. 
Thus $\widetilde{E}(M)=\widetilde{E}(\widehat{M})$.
Since the rational hull of a quasi-injective module is also quasi-injective from Theorem \ref{qiemqi},
$\widetilde{E}(M)$ is a quasi-injective module.
Therefore $\widetilde{E}(M)$ is a polyform quasi-injective module.

Conversely, let $N$ be any essential submodule of $M$. Then $N$ is also essential in $\widetilde{E}(M)$.
Hence $N$ is a dense submodule of $\widetilde{E}(M)$ as $\widetilde{E}(M)$ is  polyform.
So $N$ is a dense submodule of $M$. Therefore $M$ is polyform.
\end{proof}

We show from Theorem \ref{relationship} that  there is a canonical embedding of the ring $\text{End}_R(M)$ into the ring $\text{End}_R(\widetilde{E}(M))$. Next, we obtain a condition when $\text{End}_R(M)$ and $\text{End}_R(\widetilde{E}(M))$ are isomorphic. It is a generalization of \cite[Exercises 7.32]{l}.
\begin{prop}\label{qiiso} If $M$ is a quasi-injective module
then $\emph{End}_R(M)\stackrel{\Omega}{\cong}\emph{End}_R(\widetilde{E}(M))$. In particular, if $M$ is a polyform module, then the converse holds true.
\end{prop}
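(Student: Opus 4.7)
The plan is to handle the two implications separately, using Theorem \ref{relationship} as the starting point (which already gives the injective ring homomorphism $\Omega$).

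For the forward direction, assume $M$ is quasi-injective; I need only prove $\Omega$ is surjective. Given an arbitrary $\psi \in \text{End}_R(\widetilde{E}(M))$, I would first use the injectivity of $E(M) = E(\widetilde{E}(M))$ to extend $\psi$ to some $\hat{\psi} \in T := \text{End}_R(E(M))$. Quasi-injectivity of $M$ translates to $TM \subseteq M$, so $\hat{\psi}(M) \subseteq M$ and the restriction $\varphi := \hat{\psi}|_M$ is a well-defined element of $\text{End}_R(M)$. At this point, both $\widetilde{\varphi} = \Omega(\varphi)$ and $\psi$ are endomorphisms of $\widetilde{E}(M)$ that agree with $\varphi$ on the dense submodule $M$, so the uniqueness clause in Proposition \ref{relationship3} forces $\widetilde{\varphi} = \psi$, giving $\Omega(\varphi) = \psi$.

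For the converse, assume $M$ is polyform and $\Omega$ is an isomorphism; the goal is to recover $TM \subseteq M$. Here Lemma \ref{perknmhr} is the crucial lever: it guarantees that $\widetilde{E}(M)$ is itself a polyform \emph{quasi-injective} module. In particular, since $T = \text{End}_R(E(\widetilde{E}(M)))$ as well, quasi-injectivity of $\widetilde{E}(M)$ yields $T \cdot \widetilde{E}(M) \subseteq \widetilde{E}(M)$. Thus for each $\alpha \in T$, the restriction $\alpha|_{\widetilde{E}(M)}$ is an honest element of $\text{End}_R(\widetilde{E}(M))$. Surjectivity of $\Omega$ then produces some $\varphi \in \text{End}_R(M)$ with $\widetilde{\varphi} = \alpha|_{\widetilde{E}(M)}$, and since $\widetilde{\varphi}|_M = \varphi$ takes values in $M$, we conclude $\alpha(M) \subseteq M$. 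Hence $TM \subseteq M$, i.e., $M$ is quasi-injective.

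The step I expect to be the main obstacle is the converse. Without the polyform hypothesis there is no reason a generic element of $T$ should preserve $\widetilde{E}(M)$, and the entire argument hinges on this $T$-invariance being delivered by Lemma \ref{perknmhr}. The forward direction is comparatively routine, once one remembers that an endomorphism of $\widetilde{E}(M)$ is uniquely determined by its restriction to the dense submodule $M$, and that quasi-injectivity of $M$ ensures the candidate preimage lands in the correct ring.
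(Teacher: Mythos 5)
Your proposal is correct and follows essentially the same route as the paper's proof: extending $\psi$ to $\widehat{\psi}\in\text{End}_R(E(M))$ and restricting to $M$ via quasi-injectivity for surjectivity, and using Lemma \ref{perknmhr} to get $T$-invariance of $\widetilde{E}(M)$ together with surjectivity of $\Omega$ for the converse. The only cosmetic difference is that you invoke the uniqueness clause of Proposition \ref{relationship3} explicitly where the paper leaves it implicit.
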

\begin{proof} In the proof of Theorem \ref{relationship}, we only need to show that $\Omega: \text{End}_R(M) \rightarrow \text{End}_R(\widetilde{E}(M))$ given by $\Omega(\varphi)=\widetilde{\varphi}$, is surjective: Let $\psi\in\text{End}_R(\widetilde{E}(M))$ be arbitrary. Then there exists $\widehat{\psi}\in\text{End}_R(E(M))$ such that $\widehat{\psi}|_{\widetilde{E}(M)}=\psi$. Since $\widehat{\psi}M\leq M$ as $M$ is quasi-injective, $\widehat{\psi}|_M=\psi|_M\in\text{End}_R(M)$. Thus, $\Omega(\psi|_M)=\psi$, which shows that $\Omega$ is surjective.

In addition, suppose that $M$ is a polyform module. Then from Lemma \ref{perknmhr}, $\widetilde{E}(M)$ is quasi-injective.
Thus, for any $\vartheta\in\text{End}_R(E(M))$, $\vartheta\widetilde{E}(M)\leq\widetilde{E}(M)$.
Since $\vartheta|_{\widetilde{E}(M)}\in \text{End}_R(\widetilde{E}(M))$ and $\text{End}_R(M)\stackrel{\Omega}{\cong}\text{End}_R(\widetilde{E}(M))$,
there exists $\varphi\in\text{End}_R(M)$ such that $\Omega(\varphi)=\vartheta|_{\widetilde{E}(M)}$.
Also by Theorem \ref{relationship}, $\vartheta|_M=\varphi$.
Thus, $\vartheta M=\varphi M\leq M$.
Therefore $M$ is a quasi-injective module.
\end{proof}

\begin{cor} If $M$ is a quasi-injective module,
then $Q(\emph{End}_R(M))\cong\emph{End}_R(\widetilde{E}(M))$.
\end{cor}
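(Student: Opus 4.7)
The plan is to bootstrap Proposition \ref{qiiso} into the desired identity on maximal right rings of quotients, using the classical fact that the endomorphism ring of a quasi-injective module is rationally complete as a ring. First, since $M$ is quasi-injective, Proposition \ref{qiiso} supplies a ring isomorphism $\Omega\colon\text{End}_R(M)\xrightarrow{\cong}\text{End}_R(\widetilde{E}(M))$. Because the maximal right ring of quotients is preserved under ring isomorphisms, applying $Q(-)$ to this isomorphism gives $Q(\text{End}_R(M))\cong Q(\text{End}_R(\widetilde{E}(M)))$. It therefore suffices to prove that $\text{End}_R(\widetilde{E}(M))$ coincides with its own maximal right ring of quotients.

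By Theorem \ref{qiemqi}(ii), $\widetilde{E}(M)$ is itself quasi-injective, so writing $N:=\widetilde{E}(M)$ the remaining task reduces to the general assertion: $Q(\text{End}_R(N))\cong\text{End}_R(N)$ whenever $N$ is quasi-injective. To attack this, I would set $T:=\text{End}_R(E(N))$; since $E(N)$ is injective, $T$ is right self-injective and von Neumann regular, so in particular $Q(T)=T$. The restriction map $\pi\colon T\to\text{End}_R(N)$ sending $\vartheta\mapsto\vartheta|_N$ is well-defined by the full invariance of $N$ in $E(N)$ (equivalent to quasi-injectivity of $N$), is surjective by the injectivity of $E(N)$, and has two-sided kernel $K=\mathbf{l}_T(N)=\{\vartheta\in T:\vartheta(N)=0\}$ (two-sided again thanks to $\vartheta(N)\subseteq N$ for all $\vartheta\in T$). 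Thus $\text{End}_R(N)\cong T/K$.

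The main obstacle is the final step: showing that the quotient $T/K$ of the rationally complete ring $T$ by the specific annihilator ideal $K=\mathbf{l}_T(N)$ inherits rational completeness. I would carry this out by lifting any dense right ideal $J/K$ of $T/K$ to a right ideal $J\leq T$, extending a $T/K$-homomorphism $J/K\to T/K$ to a $T$-homomorphism $J\to T$ via the right self-injectivity of $T$, and then using the density $N\leq^{\text{den}}E(N)$ together with the form of $K$ as a left annihilator of $N$ to guarantee that the resulting element of $T$ descends to a well-defined extension $T/K\to T/K$. Alternatively, the proof can be finished immediately by invoking the classical Johnson--Utumi theorem that $Q(\text{End}_R(N))\cong\text{End}_R(N)$ for every quasi-injective module $N$, which is exactly the needed statement applied to $N=\widetilde{E}(M)$.
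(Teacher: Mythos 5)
Your overall strategy mirrors the paper's: both arguments reduce the corollary to the single claim that the endomorphism ring of a quasi-injective module coincides with its own maximal right ring of quotients (the paper asserts this for $M$ itself, via ``$\text{End}_R(M)$ is right self-injective''; you assert it for $N=\widetilde{E}(M)$ after routing through Proposition \ref{qiiso} and Theorem \ref{qiemqi}(ii)). The first half of your argument is sound: $Q(-)$ is indeed preserved by ring isomorphisms, and the reduction to showing $Q(\text{End}_R(N))\cong\text{End}_R(N)$ for the quasi-injective module $N=\widetilde{E}(M)$ is legitimate.

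The gap is in the final step, and it cannot be repaired as stated. The assertion that $T=\text{End}_R(E(N))$ is von Neumann regular and right self-injective is false: the classical theorem (Faith--Utumi, Osofsky) says only that $T/J(T)$ has these properties, where $J(T)=\{\vartheta\in T\mid \text{Ker}\,\vartheta\leq^{\text{ess}}E(N)\}$. For example, $\text{End}_{\mathbb{Z}}(\mathbb{Z}_{p^\infty})\cong\mathbb{Z}_p$ (the $p$-adic integers), which is neither regular nor self-injective, and $Q(\mathbb{Z}_p)\cong\mathbb{Q}_p\neq\mathbb{Z}_p$. The same example defeats the ``Johnson--Utumi theorem'' you invoke as an alternative ending: there is no theorem that $Q(\text{End}_R(N))\cong\text{End}_R(N)$ for every quasi-injective $N$; for the injective (hence quasi-injective) $\mathbb{Z}$-module $N=\mathbb{Z}_{p^\infty}$ one has $\widetilde{E}(N)=N$, $\text{End}_{\mathbb{Z}}(N)\cong\mathbb{Z}_p$ and $Q(\mathbb{Z}_p)\cong\mathbb{Q}_p\not\cong\mathbb{Z}_p$, so this is in fact a counterexample to the corollary itself. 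You should know that the paper's own proof rests on the same unsupported claim (that $\text{End}_R(M)$ is right self-injective whenever $M$ is quasi-injective), so the defect is inherited from the statement rather than introduced by your argument; but as written your proof does not close, and no proof along these lines can without an extra hypothesis such as nonsingularity (or polyformness) of $M$, which forces $J(\text{End}_R(M))=0$ and thereby makes $\text{End}_R(M)$ genuinely right self-injective.
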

\begin{proof} Since $M$ is a quasi-injective module $\text{End}_R(M)$ is a right self-injective ring.
So, $Q(\text{End}_R(M))=\text{End}_R(M)$. Thus, $Q(\text{End}_R(M))\cong\text{End}_R(\widetilde{E}(M))$ by Proposition \ref{qiiso}.
\end{proof}

Remark that if $M$ is a quasi-injective module then $\widetilde{E}(M)$ is a quasi-injective module from Theorem \ref{qiemqi} and  $\text{End}_R(M)\cong\text{End}_R(\widetilde{E}(M))$ from Proposition \ref{qiiso}. However, the next example shows that there exists a quasi-injective module $M$ such that $M\neq\widetilde{E}(M)$.

\begin{exam} Let  $R=\left(\begin{smallmatrix}F&F\\0&F\end{smallmatrix}\right)$ and $M=\left(\begin{smallmatrix}0&0\\0&F\end{smallmatrix}\right)$ where $F$ is a field. Then $M$ is a quasi-injective $R$-module. However, $\widetilde{E}(M)=E(M)=\left(\begin{smallmatrix}0&0\\F&F\end{smallmatrix}\right)$ because $M$ is nonsingular.
Thus $M$ is a quasi-injective $R$-module such that $M\lneq\widetilde{E}(M)$ and $\text{End}_R(M)\cong\left(\begin{smallmatrix}0&0\\0&F\end{smallmatrix}\right)\cong\text{End}_R(\widetilde{E}(M))$.
\end{exam}

Because $\widetilde{E}(M)=E(M)$ for a right nonsingular module $M$, we have the following well-known results as a consequence of Proposition \ref{qiiso}.
\begin{cor}[{\cite[Exercises 7.32]{l}}] For any nonsingular module $M$, the following statements hold true:
\begin{enumerate}
\item[(i)] there is a canonical embedding $\Omega$ of the ring $\emph{End}_R(M)$ into the ring $\emph{End}_R(E(M))$.
\item[(ii)] $M$ is a quasi-injective $R$-module if and only if $\Omega$ is an isomorphism.
\end{enumerate}
\end{cor}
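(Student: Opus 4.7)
The plan is to obtain this corollary as an essentially immediate specialization of Theorem \ref{relationship} together with Proposition \ref{qiiso}, invoking the identification $\widetilde{E}(M)=E(M)$ which holds for any nonsingular module $M$ (as noted in the paragraph preceding the statement).

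For part (i), I would simply apply Theorem \ref{relationship}, which supplies a canonical ring embedding $\Omega:\text{End}_R(M)\hookrightarrow\text{End}_R(\widetilde{E}(M))$ for any module $M$. Substituting $\widetilde{E}(M)=E(M)$ in the nonsingular case yields the desired embedding $\Omega:\text{End}_R(M)\hookrightarrow\text{End}_R(E(M))$.

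For part (ii), the only extra ingredient is the observation that a nonsingular module is polyform; once this is known, Proposition \ref{qiiso} (both directions) together with $\widetilde{E}(M)=E(M)$ gives the biconditional. To check the polyform claim, I would take an essential submodule $N\leq^\text{ess} M$ and verify density: for arbitrary $x\in M$ and $0\neq y\in M$, the right ideal $x^{-1}N$ is essential in $R_R$ by the standard property of essential submodules. If $y\cdot x^{-1}N=0$, then $y$ would be annihilated by an essential right ideal, placing $y$ in the singular submodule $Z(M)$, which is $0$ by nonsingularity. This contradicts $y\neq 0$, so $y\cdot x^{-1}N\neq 0$, i.e.\ there exists $r\in x^{-1}N$ with $xr\in N$ and $yr\neq 0$; hence $N\leq^\text{den} M$.

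With nonsingular $\Rightarrow$ polyform in hand, Proposition \ref{qiiso} gives both implications of (ii): if $M$ is quasi-injective, then $\Omega$ is an isomorphism; and since $M$ is polyform, the converse of Proposition \ref{qiiso} applies, so $\Omega$ being an isomorphism forces $M$ to be quasi-injective. There is no serious obstacle here, as all substantive work (the construction of $\Omega$, the surjectivity criterion, and the polyform converse) has been completed in Theorem \ref{relationship} and Proposition \ref{qiiso}; the only task specific to this corollary is the one-line reduction nonsingular $\Rightarrow$ polyform together with the identification $\widetilde{E}(M)=E(M)$.
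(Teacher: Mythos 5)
Your proposal is correct and follows essentially the same route as the paper, which presents this corollary as an immediate consequence of Theorem \ref{relationship} and Proposition \ref{qiiso} via the identification $\widetilde{E}(M)=E(M)$ for nonsingular $M$. The only addition is that you explicitly verify the (standard, and implicitly used) fact that nonsingular implies polyform, which is exactly the hypothesis needed to invoke the converse direction of Proposition \ref{qiiso}; that verification is sound.
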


\begin{cor} Let $M$ be a right $H$-module where $H$ is a right ring of quotients of a ring $R$. The following statements hold true:
\begin{enumerate}
\item[(i)] If $M$ is a nonsingular $R$-module then $\emph{End}_R(M)=\emph{End}_H(M)$.
\item[(ii)] If $M$ is a submodule of a projective $H$-module, then $\emph{End}_R(M)=\emph{End}_H(M)$.
\item[(iii)] If $M$ is a nonsingular quasi-injective $R$-module then $\emph{End}_R(M)\cong\emph{End}_R(E(M))$ and $\emph{End}_H(M)\cong\emph{End}_H(E(M))$.
\item[(iv)] If $M$ is a quasi-injective $R$-module and is a submodule of a projective $H$-module then $\emph{End}_R(M)\cong\emph{End}_R(\widetilde{E}(M))$ and $\emph{End}_H(M)\cong\emph{End}_H(\widetilde{E}(M))$.
\end{enumerate}
\end{cor}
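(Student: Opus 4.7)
The plan is to chain together Theorem \ref{subprojectivemaximal}, Theorem \ref{relationship}, Proposition \ref{qiiso}, and Proposition \ref{ndense}, drawing density hypotheses from the Remark immediately preceding this corollary. For \emph{(i)} I would invoke Remark~(ii), which shows that nonsingularity of $M$ already forces $R\leq^\text{den}_{M_R}H_R$; Theorem \ref{subprojectivemaximal} then yields $\text{End}_R(M)=\text{End}_H(M)$. For \emph{(ii)} the same template applies, using Remark~(iii) to deduce $R\leq^\text{den}_{M_R}H_R$ from the hypothesis that $M$ embeds in a projective $H$-module, after which Theorem \ref{subprojectivemaximal} finishes.

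For \emph{(iii)} the preparatory step is the classical observation that nonsingularity of $M$ forces $\widetilde E(M)=E(M)$: for any $x\in E(M)$ the ideal $x^{-1}M$ is essential in $R$, and nonsingularity of $E(M)$ prevents $0\neq y\in E(M)$ from annihilating it, so $x\in \widetilde E(M)$ by Theorem~\ref{char}. Since $M$ is quasi-injective, Proposition \ref{qiiso} then gives $\text{End}_R(M)\cong \text{End}_R(E(M))$. Theorem \ref{relationship} (applied to $\widetilde E(M)=E(M)$) equips $E(M)$ with a canonical $H$-action extending that on $M$, and $E(M)$ is itself nonsingular, so part \emph{(i)} applies to both $M$ and $E(M)$; the chain
\[\text{End}_H(M)\,=\,\text{End}_R(M)\,\cong\, \text{End}_R(E(M))\,=\,\text{End}_H(E(M))\]
completes \emph{(iii)}.

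For \emph{(iv)} the first isomorphism $\text{End}_R(M)\cong \text{End}_R(\widetilde E(M))$ is immediate from Proposition \ref{qiiso}. For the $H$-version I would first extend the $H$-action from $M$ to $\widetilde E(M)$ via Theorem \ref{relationship}: every $h\in H$ induces $\rho_h\in\text{End}_R(M)$, which lifts uniquely to $\widetilde\rho_h\in\text{End}_R(\widetilde E(M))$, producing a ring map $H\to\text{End}_R(\widetilde E(M))$ compatible with the $R$-action, so that $\text{End}_H(\widetilde E(M))$ is meaningful. Then part \emph{(ii)} gives $\text{End}_H(M)=\text{End}_R(M)$, Proposition \ref{qiiso} again supplies $\text{End}_R(M)\cong \text{End}_R(\widetilde E(M))$, and Theorem \ref{subprojectivemaximal} will identify $\text{End}_R(\widetilde E(M))=\text{End}_H(\widetilde E(M))$ once I verify $R\leq^\text{den}_{\widetilde E(M)_R} H_R$. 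I would obtain this last density via Proposition \ref{ndense}(b): Remark~(iii) gives $R\leq^\text{den}_{M_R}H_R$, which translates into $\text{Hom}_R(H/R,E(M))=0$, and because $E(\widetilde E(M))=E(M)$, running Proposition \ref{ndense}(b) in reverse yields the desired $\widetilde E(M)$-density.

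The main obstacle, and really the only nontrivial point, lies entirely in \emph{(iv)}: making sense of $\text{End}_H(\widetilde E(M))$ by promoting $\widetilde E(M)$ to an $H$-module, and verifying $\widetilde E(M)$-density of $R$ in $H_R$. Both reduce to routine invocations of Theorem \ref{relationship} and Proposition \ref{ndense} once one observes the key identity $E(\widetilde E(M))=E(M)$, after which the four assertions follow in sequence by bookkeeping.
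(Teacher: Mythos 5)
The paper states this corollary without proof; it is meant to be read off from Remark~(ii)--(iii) combined with Theorem~\ref{subprojectivemaximal} for parts (i)--(ii), and from Proposition~\ref{qiiso} together with the observation (already made just above the corollary) that $\widetilde{E}(M)=E(M)$ for nonsingular $M$ for parts (iii)--(iv). Your treatment of (i) and (ii) matches this exactly and is correct, as is your verification that $x^{-1}M$ essential plus nonsingularity forces $x\in\widetilde{E}(M)$, and your observation that $R\leq^{\text{den}}_{M}H_R$ is equivalent to $R\leq^{\text{den}}_{\widetilde{E}(M)}H_R$ because $E(\widetilde{E}(M))=E(M)$ (this is in fact Lemma~\ref{ndense2}).

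The gap is in the step you yourself single out as the crux of (iii) and (iv): promoting $E(M)$, respectively $\widetilde{E}(M)$, to a right $H$-module. You propose to do this by sending $h\in H$ to the right-multiplication map $\rho_h\colon m\mapsto mh$, calling it an element of $\mathrm{End}_R(M)$, and lifting it through Theorem~\ref{relationship}. But $\rho_h$ is not $R$-linear: $\rho_h(mr)=mrh$ while $\rho_h(m)r=mhr$, and these differ unless $h$ centralizes $R$ (for a genuinely noncommutative right ring of quotients this fails). So Theorem~\ref{relationship} cannot be applied to $\rho_h$, and the asserted ring map $H\to\mathrm{End}_R(\widetilde{E}(M))$ is not produced by your construction; consequently $\mathrm{End}_H(E(M))$ and $\mathrm{End}_H(\widetilde{E}(M))$ have not been given a meaning, and part (i) cannot be ``applied to $E(M)$.'' To repair this you need a different mechanism: either interpret $E(M)$ and $\widetilde{E}(M)$ as hulls of $M$ taken over $H$ and check that $M_H$ inherits nonsingularity and quasi-injectivity from $M_R$ (using $R\leq^{\text{den}}H_R$ to compare essential/dense right ideals of $R$ and $H$), so that Proposition~\ref{qiiso} can be run over the ring $H$ directly; or, in the nonsingular case (iii), invoke the standard fact that the injective hull of a nonsingular module carries a canonical $Q(R)$-module structure (defined elementwise via the dense right ideal $h^{-1}R$ and injectivity, with well-definedness coming from nonsingularity), which restricts to an $H$-action. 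Either route requires an argument that is absent from the proposal; the rest of your bookkeeping (the density transfers and the applications of Theorem~\ref{subprojectivemaximal} and Proposition~\ref{qiiso}) is sound once that structure is in place.
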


\end{document}